\theoremstyle{plain}
\newtheorem{theorem}{Theorem}[section]
\newtheorem{lemma}[theorem]{Lemma}
\newtheorem{observation}[theorem]{Observation}
\newtheorem{maintheorem}[theorem]{Main Theorem}
\theoremstyle{definition}
\newtheorem{notat}{Notation}[section]
\newtheorem{question}{Question}[section]
\newtheorem*{case1}{Case One}
\newtheorem*{case2}{Case Two}
\newtheorem{definition}[theorem]{Definition}
\newtheorem{remark}{Remark}[section]
\theoremstyle{Note}
\newtheorem{fact}{Fact}
\numberwithin{equation}{section}
\DeclareMathOperator{\val}{\mathop{val}}
\title{On the weak pseudoradiality of CSC spaces
}
\author[H. Barriga-Acosta]{H. A. Barriga-Acosta}
\address{Department of Mathematics,
University of North Carolina at Charlotte, 
Charlotte, NC 28223}
\email{hbarriga@uncc.edu}
\author[A. Dow]{A. Dow}
\address{Department of Mathematics,
University of North Carolina at Charlotte, 
Charlotte, NC 28223}
\email{adow@uncc.edu}
\date{\today}
\keywords{Sequentially compact,
 pseudoradial, weakly pseudoradial, Davies-trees, pseudoradial number, finally property K, elementary submodel}
\subjclass{03E04, 03E10, 03E17, 03E35, 03E65, 06E15, 54A20, 54A35, 54B10 }
\begin{document}
\begin{abstract}
In this paper we prove that in forcing extensions by a poset with finally property K over a model of GCH+$\square$, every compact sequentially compact space is weakly pseudoradial. This improves Theorem 4 in \cite{dow1996more}. We also prove the following assuming $\mathfrak{s}\leq \aleph_2$: (i) if $X$ is compact weakly pseudoradial, then $X$ is pseudoradial if and only if $X$ cannot be mapped onto $[0,1]^\mathfrak{s}$; (ii) if $X$ and $Y$ are compact pseudoradial spaces such that $X\times Y$ is weakly pseudoradial, then $X\times Y$ is pseudoradial. This results add to the wide variety of partial answers to the question by Gerlits and Nagy of whether the product of two compact pseudoradial spaces is pseudoradial.
\end{abstract}
\maketitle

\bibliographystyle{plain}

\section{Introduction}
A space is {\it sequentially compact} if every countable sequence
has a converging subsequence. Following \cite{dow1996more},
say that a space is CSC if it is compact and sequentially compact. 
A subset $A$ of a space $X$ is {\it radially closed} if there is no sequence $\{x_\alpha : \alpha < \kappa\} \subseteq A$ that converges to a point in $X\setminus A$ (here ``converges'' means that each neighborhood of the limit point leaves out $< \kappa$-many members of the sequence, hence we can assume $\kappa$ is regular). 
The {\it radial closure} of $A$ is the minimal radially closed set $A^{(r)}$ that contains $A$. A space is {\it pseudoradial} if the radial closure of every subset is closed. 

\medskip 
The {\it splitting number} $\mathfrak{s}$, which is equal to 
$\min \{ \kappa : 2^\kappa$ is not sequentially compact$\}$, plays an important role regarding pseudoradial spaces. 
It is well known that $2^{\omega_1}$ is pseudoradial if and only if $\mathfrak{s} > \omega_1$. 
Analogously we can define the {\it pseudoradial number}, $\mathfrak{pse} = \min \{ \kappa : 2^\kappa$ is not pseudoradial$\}$. Then $\mathfrak{s}> \omega_1$ implies $\mathfrak{pse} > \omega_1$ (hence $\mathfrak{pse} = \omega_1$ implies $\mathfrak{s} = \omega_1$). Moreover, since every compact pseudoradial space is sequentially compact we have $\mathfrak{pse \leq s}$.
It is unclear to the authors whether $\mathfrak{pse}$ is regular or can have countable cofinality. 

\medskip
In \cite{juhasz1993sequential} Juhász and Szentmiklóssy proved that (i) assuming $\mathfrak c\leq\aleph_2$, every CSC space is pseudoradial (this improves the result in \cite{shapirovskii1990ordering} by \v Sapirovski\v\i \ who assumed CH).
It was also shown there that (ii) a compact non-pseudoradial space contains a subset of size less than $\mathfrak{c}$ whose closure is not pseudoradial.
Further, they proved that (iii) there is a model of $\mathfrak{c}= \aleph_3$ in which there is a CSC non-pseudoradial space, and asked whether $\mathfrak{c}= \aleph_3$ implies the existence of such spaces.
In \cite{dow1996more} Dow, Juhász, Soukup and Szentmiklóssy improved (ii) by replacing $\mathfrak{c}$ for $\mathfrak{s}$, and they used this fact to show that (iv) in the extension by adding any number of Cohen reals to a model of CH, every CSC space is pseudoradial. 
This solves in the negative to the question from (iii).
Later, in \cite{bella2001pseudoradial} Bella, Dow and Tironi  focused mainly on whether a compact non-pseudoradial space necessarily contains a closed separable non-pseudoradial subspace. They showed that this is consistently true: if $2^{\omega_2}$ is not pseudoradial, then a compact space is pseudoradial if eve\-ry closed separable subspace is pseudoradial. The following question remains open.

\begin{question}[\v Sapirovski\v \i]\label{Q:Sapirovskii}
Is it true in ZFC that $2^{\omega_2}$ is not pseudoradial?
\end{question}

A weaker property than ``all closed separable subspaces are pseudoradial'' is the following. 
A space is {\it weakly pseudoradial} if the radial closure of every countable subset is closed. The work in this paper is motivated by the facts stated above and the target is to study weak pseudoradiality. 
It turns out that under the presence of $\mathfrak{pse}=\aleph_2$, weak pseudoradiality provides a nice equivalence of pseudoradiality. In Section~\ref{SectionProducts} we  prove the following

\begin{theorem}\label{t:pseudoradialiff}
Suppose $\mathfrak{pse}\leq\aleph_2$. Let $X$ be a compact weakly pseudoradial space. Then, $X$ is pseudoradial if and only if $X$ cannot be mapped onto $[0,1]^\mathfrak{pse}$.
\end{theorem}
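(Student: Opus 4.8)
The plan is to prove the two implications separately, noting that all the difficulty sits in the direction ``$X$ cannot be mapped onto $[0,1]^{\mathfrak{pse}}$ $\Rightarrow$ $X$ is pseudoradial''. The reverse implication is a soft ZFC fact that uses neither weak pseudoradiality nor the bound $\mathfrak{pse}\le\aleph_2$, so I would dispose of it first.

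For the reverse implication (``$X$ pseudoradial $\Rightarrow$ no surjection onto the cube'') I would argue by contraposition and lean on two preservation facts. First, if $f\colon X\to Y$ is a continuous surjection of compact Hausdorff spaces and $X$ is pseudoradial, then $Y$ is pseudoradial: continuity preserves convergence of $\kappa$-sequences, so for any radially closed $B\subseteq Y$ the preimage $f^{-1}(B)$ is radially closed in $X$, hence closed; since $f$ is a closed map and is onto, $B=f(f^{-1}(B))$ is closed, and as every $B^{(r)}$ is radially closed this shows every radial closure in $Y$ is closed. Second, a closed subspace of a pseudoradial space is pseudoradial, because the closedness of the subspace forces limits of its convergent sequences to stay inside it. Now $2^{\mathfrak{pse}}$ is a closed subspace of $[0,1]^{\mathfrak{pse}}$ and is not pseudoradial by the very definition of $\mathfrak{pse}$. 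Hence a surjection $X\twoheadrightarrow[0,1]^{\mathfrak{pse}}$ would make $[0,1]^{\mathfrak{pse}}$, and therefore $2^{\mathfrak{pse}}$, pseudoradial, a contradiction.

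For the main implication I would again argue by contraposition: assuming $X$ is compact, weakly pseudoradial, and \emph{not} pseudoradial, I would produce a continuous surjection of $X$ onto $[0,1]^{\mathfrak{pse}}$. Fix $A\subseteq X$ with $A^{(r)}\neq\overline{A}$ and set $B=A^{(r)}$, so $B$ is radially closed and there is $p\in\overline{B}\setminus B$. Here weak pseudoradiality is exactly the leverage that makes the failure ``wide'': for every countable $C\subseteq B$ the set $C^{(r)}$ is closed and, since $B$ is radially closed, $C^{(r)}\subseteq B$; hence $p\notin C^{(r)}$. Thus $p$ lies in $\overline{B}$ but in the radial closure of \emph{no} countable subset of $B$, which already forces any free sequence in $B$ clustering at $p$ to have uncountable length. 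The goal is then to convert this uncountable ``sequential depth'' at $p$ into a configuration of length $\mathfrak{pse}$ to which Shapirovskii's theorem on continuous maps of compact spaces onto Tychonoff cubes applies, yielding the desired surjection onto $[0,1]^{\mathfrak{pse}}$. I would carry this out by a transfinite recursion choosing points $x_\alpha\in B$ together with a shrinking system of neighbourhoods of $p$ and separating open sets, threaded through a chain of elementary submodels so that freeness and the requisite $\pi$-character are maintained at each stage.

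The hard part is precisely this recursion, and it is where both hypotheses are indispensable. Weak pseudoradiality guarantees the construction cannot terminate below $\omega_1$, but reaching full width $\mathfrak{pse}$ and, crucially, securing enough $\pi$-character to force a surjection onto the full cube $[0,1]^{\mathfrak{pse}}$ — rather than merely producing a long free sequence, which only testifies to large tightness, or a map onto a $2^{\mathfrak{pse}}$-style corner that an arbitrary (possibly connected) $X$ need not admit — requires understanding the minimal non-pseudoradial behaviour, and this is available only under $\mathfrak{pse}\le\aleph_2$. I expect the argument to split according to whether $\mathfrak{pse}=\aleph_1$ (equivalently $\mathfrak{s}=\aleph_1$) or $\mathfrak{pse}=\aleph_2$, invoking the improvement of (ii) in \cite{dow1996more} that a compact non-pseudoradial space carries a non-pseudoradial closure of size below $\mathfrak{s}$, together with the Davies-tree and elementary-submodel bookkeeping developed earlier in the paper to coordinate the construction across $\aleph_2$-many indices. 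Controlling the cube-forcing data uniformly along all $\mathfrak{pse}$ coordinates, rather than just extending a free sequence, is the step I expect to be genuinely delicate.
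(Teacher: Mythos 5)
Your proof of the reverse implication (pseudoradial $\Rightarrow$ no map onto $[0,1]^{\mathfrak{pse}}$) is correct and complete: pseudoradiality of a compact space passes to continuous images (your closed-map argument is right) and to closed subspaces, and $2^{\mathfrak{pse}}$ is non-pseudoradial by the definition of $\mathfrak{pse}$. The paper dismisses this direction as immediate, and your write-up is a valid expansion of that remark.

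The forward implication is where the entire content of the theorem lies, and there you have a plan rather than a proof, with the decisive step missing. You propose a transfinite recursion, ``threaded through a chain of elementary submodels,'' that is to convert non-pseudoradiality plus weak pseudoradiality into a closed set on which every point has $\pi$-character at least $\mathfrak{pse}$, so that Theorem~\ref{t:Sapirovskii} yields the cube map; but you give no mechanism for carrying out even a single step of this recursion, and you yourself flag the uniform control of ``cube-forcing data'' along $\mathfrak{pse}$ coordinates as the part you cannot yet do. That step \emph{is} the theorem. For comparison, the paper works in the other logical direction and replaces your recursion by two concrete lemmas: Lemma~\ref{LemmaPSE}, which shows (via an induction constructing an independent family of clopen sets, using irreducible maps onto Stone spaces of quotient algebras and the pressing down lemma --- no elementary submodels) that a compact space admitting no map onto $2^{\mathfrak{pse}}$ contains, inside any closed subspace, a countable $\pi$-net of closed $G_\lambda$-sets with $\lambda<\mathfrak{pse}$; and Lemma~\ref{LemmaLambda}, which uses such $\pi$-nets to prove that radially closed sets are $G_\lambda$-dense in their closures. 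Weak pseudoradiality enters at exactly one point: for a countable subset $C$ of a radially closed set $A$, the radial closure of $C$ equals $\overline{C}$, so any $u$-limit of $C$ lies in $A$; this is what allows one to build a $\lambda$-sequence in $A$ converging into a putative closed $G_\lambda$ ``hole'' $H\subseteq\overline{A}\setminus A$ with $\lambda=\lambda(A,X)$ minimal, a contradiction. The hypothesis $\mathfrak{pse}\le\aleph_2$ is used to know $\mathfrak{pse}$ is regular and, in the case $\mathfrak{pse}=\aleph_2$, that $\lambda\ge\mathfrak{pse}^-=\omega_1$ (by sequential compactness $\lambda$ has uncountable cofinality), so Lemma~\ref{LemmaLambda} applies; when $\mathfrak{pse}=\aleph_1$ one quotes Theorem~\ref{t:Sapirovskii} directly to get a point of countable $\pi$-character in $H$. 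None of this machinery, nor any substitute for it, appears in your proposal; moreover, the tools you name as the expected ingredients --- the Davies-tree apparatus of Section~\ref{MainResult} and the theorem of \cite{dow1996more} on non-pseudoradial closures of size below $\mathfrak{s}$ --- play no role in the paper's proof of this theorem, so they are not evidence that your recursion can be completed.
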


\medskip
A poset $\mathbb{Q}$ is {\it linked} provided its members are pairwise compatible. 
A  subposet $\mathbb{Q}\subseteq \mathbb{P}$ is {\it complete} in $\mathbb{P}$ is every maximal antichain of $\mathbb{Q}$ is maximal in $\mathbb{P}$.
We say that a poset $\mathbb{P}$ has {\it finally property K} if for every complete subposet $\mathbb{Q}\subseteq \mathbb{P}$, the factor poset $\mathbb{P}/\mathbb{Q}$ (see \cite{kunen2014set}) is forced by $\mathbb{Q}$ to have property K (every uncountable subset has an uncountable linked subset) as in  \cite{dow2014reflecting}. As pointed out in Section~\ref{SectionPreliminaries}, posets with finally property K are $ccc$, the Cohen forcing has finally property K, and finite support iterations (products) of posets with finally property K have finally property K. 
Now we state the central result of this document whose proof is in Section~\ref{MainResult}.

\begin{maintheorem}\label{MainTheorem}
Assume $V\models$\textup{GCH+}$\square$. Suppose that $\mathbb{P}$ is a poset with finally property K and $G\subseteq \mathbb{P}$ is a $\mathbb{P}$-generic filter. Then, in $V[G]$, every CSC space is weakly pseudoradial.
\end{maintheorem}

Let us observe that if we further assume $V[G] \models \mathfrak{s}\leq\aleph_2$, then in the extension every CSC is pseudoradial: if $X$ is CSC then it is weakly pseudoradial by Main Theorem \ref{MainTheorem}, and in particular it is sequentially compact. Now observe that $\mathfrak{s}\leq \aleph_2$ implies $\mathfrak{pse} = \mathfrak{s}$. Thus, $X$ cannot be mapped onto the non-sequentially compact space $[0,1]^\mathfrak{pse}$. Theorem \ref{t:pseudoradialiff} applies, so $X$ is pseudoradial.
Recall that in forcing extensions by adding Cohen reals we have $\mathfrak{s}=\aleph_1$.
Subsequently Main Theorem \ref{MainTheorem} generalizes result (iv) stated above (Theorem 4  in \cite{dow1996more}).
\bigskip

In a different direction, one of the main problems in the theory of pseudoradial spaces is due to Gerlits and Nagy (\cite{gerlits}) who asked whether the product of two compact Hausdorff pseudoradial spaces is pseudoradial. Many partial results have been given, though the question remains open in ZFC. In \cite{tironi1989products} Frolik and Tironi proved that the product of two compact Hausorff pseudoradial spaces is pseudoradial if one of them is radial. This was improved by Bella and Gerlits in \cite{bella2001pseudoradial} by only requiring one of the factors to be semi-radial. In \cite{bella1993countable} Bella proved that the the product of countably many compact Hausdorff $R$-monolitic spaces is $R$-monolitic. As a consequence of Juhász and Szentmiklóssy result, if $\mathfrak{c}\leq \aleph_2$ then the product of countably many pseudoradial spaces is pseudoradial. In \cite{obersnel1995products} Obersnel and Tironi showed assuming $\mathfrak{h} \leq \aleph_3$ that for any $\kappa < \mathfrak{h}$, if $\{ X_\alpha : \alpha < \kappa \}$ is a family of compact Hausdorff pseudoradial spaces with $|X_\alpha| < 2^{\omega_2}$, then $\prod_{\alpha < \kappa} X_\alpha$ is pseudoradial.

\medskip
We use Theorem \ref{t:pseudoradialiff} and Lemma \ref{LemmaProductCannotBeMapped} to prove the next result and we leave a natural question from it.

\begin{theorem}\label{t:product}
Suppose $\mathfrak{pse} \leq \aleph_2$. Let $X$ and $Y$ be compact pseudoradial spaces such that $X\times Y$ is weakly pseudoradial. Then $X\times Y$ is pseudoradial.
\end{theorem}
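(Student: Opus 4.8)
The plan is to deduce Theorem~\ref{t:product} directly from Theorem~\ref{t:pseudoradialiff} applied to the product space $X\times Y$, using the auxiliary Lemma~\ref{LemmaProductCannotBeMapped} to control the ``cannot be mapped onto $[0,1]^{\mathfrak{pse}}$'' hypothesis. The whole strategy rests on the following observation: since $\mathfrak{pse}\leq\aleph_2$ is assumed, and $X\times Y$ is given to be compact and weakly pseudoradial, Theorem~\ref{t:pseudoradialiff} tells us that $X\times Y$ is pseudoradial \emph{if and only if} $X\times Y$ cannot be continuously mapped onto $[0,1]^{\mathfrak{pse}}$. Thus the entire proof reduces to verifying that, under the stated hypotheses, the product $X\times Y$ admits no continuous surjection onto the cube $[0,1]^{\mathfrak{pse}}$.

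\medskip

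First I would apply Theorem~\ref{t:pseudoradialiff} to the space $X\times Y$. This requires checking its hypotheses: $\mathfrak{pse}\leq\aleph_2$ holds by assumption, and $X\times Y$ is compact (as a product of two compact spaces) and weakly pseudoradial (given). So it only remains to rule out a map onto $[0,1]^{\mathfrak{pse}}$, and then pseudoradiality follows from the ``if'' direction. The key fact I would invoke here is that each factor $X$ and $Y$ is itself pseudoradial, and in particular (as noted in the introduction) every compact pseudoradial space is sequentially compact; hence both $X$ and $Y$ are CSC. A sequentially compact space cannot be mapped onto $[0,1]^{\mathfrak{pse}}$, since $[0,1]^{\mathfrak{pse}}$ (equivalently $2^{\mathfrak{pse}}$) fails to be sequentially compact by the very definition of $\mathfrak{pse}$ together with $\mathfrak{pse}\leq\mathfrak{s}$.

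\medskip

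Next I would turn to the product. The heart of the argument is Lemma~\ref{LemmaProductCannotBeMapped}, which I would use to promote the statement ``neither $X$ nor $Y$ maps onto $[0,1]^{\mathfrak{pse}}$'' to the statement ``$X\times Y$ does not map onto $[0,1]^{\mathfrak{pse}}$.'' Concretely, suppose for contradiction that there were a continuous surjection $f\colon X\times Y \to [0,1]^{\mathfrak{pse}}$. Since both $X$ and $Y$ are sequentially compact, so is their product $X\times Y$; but $[0,1]^{\mathfrak{pse}}$ is not sequentially compact, and sequential compactness is preserved under continuous images, giving a contradiction. This direct argument via sequential compactness already suffices to rule out such a surjection, and Lemma~\ref{LemmaProductCannotBeMapped} packages precisely this kind of reasoning (presumably in the sharper form that a map of $X\times Y$ onto the cube would force one of the coordinate factors to map onto a large subcube).

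\medskip

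The main obstacle I anticipate is the passage from ``$X$ and $Y$ individually fail to map onto $[0,1]^{\mathfrak{pse}}$'' to ``the product fails as well,'' because continuous images of products do not in general decompose coordinatewise, so one cannot simply project. This is exactly where Lemma~\ref{LemmaProductCannotBeMapped} must do the real work. If that lemma is stated at the level of generality of ``sequentially compact factors cannot combine to a non-sequentially-compact image,'' then the argument is immediate and the only subtlety is confirming that $\mathfrak{pse}\leq\aleph_2$ forces $[0,1]^{\mathfrak{pse}}$ to genuinely fail sequential compactness (which follows since $\mathfrak{pse}\leq\mathfrak{s}$, so $2^{\mathfrak{pse}}$ is not sequentially compact). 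Once the no-surjection claim is secured, the conclusion that $X\times Y$ is pseudoradial is an immediate application of Theorem~\ref{t:pseudoradialiff}, completing the proof.
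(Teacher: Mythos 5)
Your overall skeleton matches the paper's proof: reduce, via Theorem~\ref{t:pseudoradialiff}, to showing that $X\times Y$ admits no continuous surjection onto $[0,1]^{\mathfrak{pse}}$, and use Lemma~\ref{LemmaProductCannotBeMapped} to pass from the factors to the product. But the way you discharge the ``no surjection'' claim contains a genuine error. You assert that $2^{\mathfrak{pse}}$ fails to be sequentially compact ``by the very definition of $\mathfrak{pse}$ together with $\mathfrak{pse}\leq\mathfrak{s}$.'' This is backwards: by the definition of $\mathfrak{s}$, the cube $2^\kappa$ fails to be sequentially compact exactly when $\kappa\geq\mathfrak{s}$, so $2^{\mathfrak{pse}}$ is non-sequentially compact if and only if $\mathfrak{pse}=\mathfrak{s}$. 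The inequality $\mathfrak{pse}\leq\mathfrak{s}$ gives you nothing here, and the hypothesis $\mathfrak{pse}\leq\aleph_2$ does not force $\mathfrak{pse}=\mathfrak{s}$: in the model recalled in Section~\ref{SectionPreliminaries} (MA, $\mathfrak{c}=\omega_3$, the cub filter on $\omega_1$ of character $\omega_2$) one has $\mathfrak{pse}=\omega_2$ while $\mathfrak{s}=\mathfrak{c}=\omega_3$, so $2^{\mathfrak{pse}}=2^{\omega_2}$ \emph{is} sequentially compact (indeed separable, sequentially compact, weakly pseudoradial, and yet not pseudoradial). In such a model your sequential-compactness argument rules out nothing: $X\times Y$ being sequentially compact is perfectly compatible with its mapping onto $[0,1]^{\mathfrak{pse}}$, so your proof of the key claim collapses exactly in the situation the hypothesis $\mathfrak{pse}\leq\aleph_2$ is meant to cover.

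The same confusion undercuts your reading of Lemma~\ref{LemmaProductCannotBeMapped}. It is not a packaging of ``sequentially compact spaces cannot map onto a non-sequentially-compact cube''; if that argument worked, the lemma would be superfluous and the whole theorem trivial. The lemma is a genuinely topological statement about dyadic cubes (proved via irreducible maps, $\pi$-character and ultrafilter limits), and it is needed precisely because sequential compactness cannot do the job. The repair is short and is what the paper does: each factor is compact and pseudoradial, hence weakly pseudoradial, so the \emph{forward} (``only if'') direction of Theorem~\ref{t:pseudoradialiff} --- not sequential compactness --- shows that neither $X$ nor $Y$ maps onto $[0,1]^{\mathfrak{pse}}$; then Lemma~\ref{LemmaProductCannotBeMapped} shows that $X\times Y$ does not map onto $[0,1]^{\mathfrak{pse}}$, and the ``if'' direction of Theorem~\ref{t:pseudoradialiff}, applied to the compact weakly pseudoradial space $X\times Y$, yields that the product is pseudoradial.
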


\begin{question}
Is it true in ZFC that the product of two compact pseudoradial spaces is weakly pseudoradial?
\end{question}

\section{Preliminaries}\label{SectionPreliminaries}

\subsection{Topology}\label{SubTopology}

We follow notation from \cite{juhasz1993sequential}. Let $X$ be a space and $A\subseteq X$ be a non-closed subset. Define
$$\lambda (A, X) = \min \{ \lambda : \exists K\subseteq \overline{A} \text{ a non-empty closed } G_\lambda\text{-set } (K \cap A = \emptyset) \}.$$
Note that if $K$ is a $G_\lambda$-set witness of $\lambda = \lambda (A,X)$, then by the minimality of $\lambda$ there is a sequence $\{ x_\alpha : \alpha < \lambda\} \subseteq A$ converging to $K$, that is, every open set containing $K$ also contains a final segment of $\{ x_\alpha : \alpha < \lambda\}$.
Moreover, if $X$ is sequentially compact and $A$ is radially closed then $\lambda (A,X)$ has uncountable cofinality.  
\medskip


\begin{observation}\label{Observation:implications}
Let $X$ be compact. Then,
\begin{enumerate}
    \item ``$X$ is pseudoradial'' implies
    \item ``all closed separable subspaces of $X$ are pseudoradial''  implies
    \item  ``$X$ is weakly pseudoradial'' implies
    \item  ``$X$ is sequentially compact''.
\end{enumerate}
\end{observation}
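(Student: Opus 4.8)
The plan is to treat the three implications separately, noting throughout that \emph{compact} means compact Hausdorff. The engine behind the first two implications is a single absoluteness fact: if $F\subseteq X$ is closed and $A\subseteq F$, then the radial closure of $A$ computed in $F$ coincides with the one computed in $X$. Indeed, a closed set is radially closed, so the minimal radially closed superset $A^{(r)}$ of $A$ in $X$ already lies inside $F$; moreover, for a subset $C\subseteq F$, being radially closed in $F$ and being radially closed in $X$ are equivalent, since any sequence from $C$ that converges in $X$ has its limit in $\overline{C}\subseteq\overline{F}=F$, and convergence to a point of $F$ is the same in the subspace topology as in $X$. Hence the two minimal radially closed supersets agree. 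For $(1)\Rightarrow(2)$ I would let $F$ be a closed (separable) subspace of a pseudoradial $X$ and $A\subseteq F$: by the absoluteness fact the radial closure of $A$ in $F$ equals its radial closure in $X$, which is closed in $X$ by pseudoradiality, and being a closed subset of $X$ contained in $F$ it is closed in $F$. Thus every closed subspace of $X$ is pseudoradial, in particular the separable ones.

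For $(2)\Rightarrow(3)$, given a countable $A\subseteq X$, the closure $\overline{A}$ is a closed separable subspace, hence pseudoradial by hypothesis. Applying the absoluteness fact with $F=\overline{A}$, the radial closure of $A$ in $X$ equals its radial closure in $\overline{A}$, which is closed because $\overline{A}$ is pseudoradial; so $A^{(r)}$ is closed in $X$ and $X$ is weakly pseudoradial.

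The only implication with genuine content is $(3)\Rightarrow(4)$, which I would prove by contraposition. Suppose $X$ is not sequentially compact and fix a sequence with no convergent subsequence; since a value attained infinitely often would give a constant convergent subsequence, we may pass to a subsequence with distinct terms, whose range $A=\{x_n:n\in\omega\}$ is then infinite. As an infinite subset of the compact space $X$, $A$ has an accumulation point $q$, and setting $B=A\setminus\{q\}$ we still have $q\in\overline{B}\setminus B$, so $B$ is not closed. The crux is to check that $B$ is nonetheless radially closed, so that $B^{(r)}=B$ contradicts weak pseudoradiality. For a convergent sequence of length $\omega$ from $B$: if it takes infinitely many values it is, as a set, a subsequence of $\langle x_n\rangle$ and would converge, contradicting our choice, so it is eventually constant with limit in $B$. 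For a convergent sequence of regular uncountable length $\kappa$ drawn from the countable set $B$, some single value $b\in B$ is attained $\kappa$ times; as every neighborhood of the limit $p$ omits fewer than $\kappa$ terms, $b$ lies in every neighborhood of $p$, whence $p=b\in B$ by Hausdorffness. Thus $B$ is radially closed, and the contradiction follows.

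The main obstacle is exactly this verification that $B$ is radially closed across all regular lengths: the length-$\omega$ case is where the failure of sequential compactness is consumed, while the uncountable case rests on a pigeonhole argument together with Hausdorffness to force limits of sequences drawn from a countable set back into that set. Everything else is soft and follows from the absoluteness of radial closure under passage to a closed subspace.
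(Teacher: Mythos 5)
Your proposal is correct. Note that the paper offers no proof of this Observation at all --- it is stated as a routine remark (the text immediately after it only discusses when the implications reverse) --- so there is nothing to compare against; your writeup simply supplies the standard details. Your two ingredients are exactly the natural ones: the closed-hereditarity of radial closure (radial closures computed in a closed subspace $F$ agree with those computed in $X$, since $F$ is radially closed and convergence of transfinite sequences relativizes to closed subspaces), which yields $(1)\Rightarrow(2)\Rightarrow(3)$ without any use of compactness; and the contrapositive argument for $(3)\Rightarrow(4)$, where compactness produces an accumulation point $q$ of the range $A$ of a sequence with no convergent subsequence, and $B=A\setminus\{q\}$ is countable, non-closed, and radially closed. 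The only step worth writing more carefully is the claim that a convergent injective $\omega$-sequence with terms in $B$ ``is, as a set, a subsequence of $\langle x_n\rangle$'': one should extract a further subsequence along which the original indices $n$ are strictly increasing, so that it is literally a subsequence of $\langle x_n\rangle$ and inherits its failure to converge; this is a standard refinement and your pigeonhole/$T_1$ treatment of the regular uncountable lengths is likewise sound.
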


Under $\mathfrak{c} \leq \aleph_2$, every CSC space is pseudoradial, hence the preceding properties are equivalent. We find it interesting to expand the discussion on Observation~\ref{Observation:implications}. 

\smallskip 
A key lemma in \cite{juhasz1993sequential} is: if $X$ is CSC then for every non-closed set $A\subseteq X$, $\omega < \lambda(A, X) < \mathfrak{c}^-$, where $\mathfrak{c}^-$ is equal to $\mathfrak{c}$ in case it is limit; otherwise, it is the predecessor of $\mathfrak{c}$. 
Note that $\mathfrak{p=c}$ suffices to prove (4) implies (3): if $A$ is countable non-closed, then there is $K \subseteq \overline{A} \setminus A$ a closed $G_\lambda$-set, where $\lambda = \lambda (A,X)$. 
This produces a centered family on the countable set $A$ of size $\lambda < \mathfrak{p}$, hence it has a pseudointersection. Because $X$ is sequentially compact, the pseudointersection has a subsequence converging to some point in $K$. This contradicts $A$ is non-closed. 

It was also proven in \cite{juhasz1993sequential} that if the c.u.b. filter on $\omega_1$ has character $\kappa$, then $2^\kappa$ is not pseudoradial. Note that $2^\omega = \omega_3$ is consistent with MA plus `the cub filter on $\omega_1$ has character $\omega_2$'. In this model we have on one hand, $\mathfrak{p=s=c}=\omega_3$ which implies $2^{\omega_2}$ is separable, sequentially compact and weakly pseudoradial. On the other hand, $2^{\omega_2}$ is not pseudoradial. That is, in this model (3) does not imply (2). We leave the questions regarding the rest of the implications.

\begin{question}
Is it consistent with ZFC that there exists a CSC non-weakly pseudoradial space?
\end{question}

\begin{question}
Is it consistent with ZFC that there exists a compact non-pseudoradial space in which all closed separable subspaces are pseudoradial? 
\end{question}

For the last question necessarily $2^{\omega_2}$ must be pseudoradial due to Bella-Dow-Tironi \cite{bella2001pseudoradial}. This would answer in the positive to Question \ref{Q:Sapirovskii}.

\bigskip
If $x\in X$, a {\it $\pi$-base} of $x$ is a family $\mathcal{U}$ of non-empty open sets of $X$ such that every neighborhood of $x$ contains a member of $\mathcal{U}$. The {\it $\pi$-character of $x$ in $X$} is $\pi \chi (x, X) = \min \{|\mathcal{U}| : \mathcal{U} \ \mbox{is a} \ \pi \mbox{-base of} \ x \}$, and the {\it $\pi$-character of $X$} is $\pi \chi (X) = \sup \{ \pi \chi (x, X) : x\in X \}$. In Section \ref{SectionProducts} we will use these notions as well as the following result in \cite{juhasz1980cardinal}.

\begin{theorem}[\v Sapirovski\v \i]\label{t:Sapirovskii}
The following are equivalent for a compact space $X$:
\begin{enumerate}
    \item[i)] $X$ can be continuously mapped onto $I^\kappa$;
    \item[ii)] there is a closed set $F\subseteq X$ which can be continuously mapped onto $2^\kappa$;
    \item[iii)] there is a closed set $F\subseteq X$ with $\pi \chi (x, F) \geq \kappa$ for each $x\in F$.
\end{enumerate}
\end{theorem}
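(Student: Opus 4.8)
The plan is to establish the cycle $(i)\Leftrightarrow(ii)$ and $(ii)\Leftrightarrow(iii)$, isolating the single genuinely hard implication $(iii)\Rightarrow(ii)$; throughout write $I=[0,1]$, assume $\kappa\ge\omega$, and use that a compact Hausdorff space is normal. The equivalence $(i)\Leftrightarrow(ii)$ is soft. For $(i)\Rightarrow(ii)$, note $2^\kappa=\{0,1\}^\kappa$ is closed in $I^\kappa$, so if $f\colon X\to I^\kappa$ is a continuous surjection then $F:=f^{-1}[2^\kappa]$ is closed and $f\restriction F$ maps $F$ onto $2^\kappa$. For $(ii)\Rightarrow(i)$, fix a continuous surjection $g\colon F\to 2^\kappa$ with $F\subseteq X$ closed and recall the coordinatewise Cantor map $h\colon 2^\kappa\to I^\kappa$ is onto; each coordinate of $h\circ g\colon F\to I^\kappa$ is a continuous function $F\to I$ that extends to $X\to I$ by the Tietze theorem, and assembling these extensions gives a continuous $\widetilde f\colon X\to I^\kappa$ whose image contains $(h\circ g)[F]=I^\kappa$. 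Hence it remains to prove $(ii)\Leftrightarrow(iii)$.

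For $(ii)\Rightarrow(iii)$ I would use the behaviour of $\pi$-character under irreducible maps. Given $g\colon F\to 2^\kappa$ onto, a Zorn's-lemma argument produces a closed $F_0\subseteq F$ that is minimal with $g[F_0]=2^\kappa$, so $g\restriction F_0$ is irreducible. The point is that irreducible surjections do not decrease the $\pi$-character of points: if $\{U_i\}$ is a $\pi$-base at $x\in F_0$, then the sets $g^{\#}[U_i]:=2^\kappa\setminus g[F_0\setminus U_i]$ are nonempty open (nonemptiness is exactly irreducibility) and form a $\pi$-base at $g(x)$, whence $\pi\chi(x,F_0)\ge\pi\chi(g(x),2^\kappa)=\kappa$. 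Since every point of $2^\kappa$ has $\pi$-character $\kappa$, the closed set $F_0$ witnesses $(iii)$.

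The heart of the matter is $(iii)\Rightarrow(ii)$. Let $F\subseteq X$ be closed with $\pi\chi(x,F)\ge\kappa$ for all $x\in F$. The plan is to construct an independent family of pairs of disjoint closed subsets of $F$, i.e. pairs $\{(A_\alpha,B_\alpha):\alpha<\kappa\}$ with $A_\alpha\cap B_\alpha=\emptyset$ and with every cell $C_s:=\bigcap_{s(\beta)=0}A_\beta\cap\bigcap_{s(\beta)=1}B_\beta$ nonempty, where $s$ ranges over finite partial functions from $\kappa$ to $2$. Granting this, $(ii)$ follows cleanly: put $F':=\bigcap_{\alpha<\kappa}(A_\alpha\cup B_\alpha)$. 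For each $t\in 2^\kappa$ the family $\{A_\alpha:t(\alpha)=0\}\cup\{B_\alpha:t(\alpha)=1\}$ is centered by independence, so by compactness its intersection is a nonempty subset of $F'$; thus $F'\ne\emptyset$. On $F'$ each point lies in exactly one of $A_\alpha,B_\alpha$, so $A_\alpha\cap F'$ is clopen in $F'$, and the map sending $x\in F'$ to the point of $2^\kappa$ whose $\alpha$-th coordinate records whether $x\in B_\alpha$ is continuous; it is onto precisely because each $t\in 2^\kappa$ is hit by a point of the intersection above.

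The family I would build by transfinite recursion of length $\kappa$, maintaining independence together with the stronger invariant that every cell has nonempty interior (which in particular excludes isolated points, consistent with $\pi\chi\ge\kappa$). At stage $\alpha<\kappa$ there are only $|\alpha|+\omega<\kappa$ cells to preserve, and in the interior of each cell the hypothesis $\pi\chi\ge\kappa$ supplies room to pick two points with disjoint neighbourhoods; the new pair $(A_\alpha,B_\alpha)$ must meet every cell in both ways. Here is the main obstacle: one must assemble the ``$0$-pieces'' and ``$1$-pieces'' chosen across the $<\kappa$ cells into two closed sets whose closures stay disjoint, even though the pieces may accumulate onto one another. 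I expect to handle this as \v Sapirovski\v\i\ does, using normality together with the bound on the number of cells, shrinking the pieces inside successively chosen neighbourhoods — equivalently, organizing the selection through an elementary submodel $M\prec H(\theta)$ with $\kappa\in M$ — so the two unions cannot accumulate onto a common point. Showing that this separation is always achievable while keeping all $<\kappa$ cells split on both sides is the step I expect to be most delicate; by contrast, passing the invariant through limit stages is routine compactness, and the reduction in the previous paragraph is purely formal.
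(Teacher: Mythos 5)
First, a framing note: the paper does not prove this theorem at all --- it is quoted from \cite{juhasz1980cardinal} --- so your attempt can only be measured against the classical argument. Your soft implications are correct: (i)$\Leftrightarrow$(ii) via the Cantor surjection $2^\kappa\to I^\kappa$ and Tietze extension is fine; (ii)$\Rightarrow$(iii) via a Zorn-minimal closed $F_0$, irreducibility of $g\restriction F_0$, the observation that $g^{\#}[U]=2^\kappa\setminus g[F_0\setminus U]$ carries a $\pi$-base at $x$ to a $\pi$-base at $g(x)$, and $\pi\chi(y,2^\kappa)=\kappa$, is also correct; and the purely formal reduction of (iii)$\Rightarrow$(ii) to constructing a $\kappa$-sized independent family of pairs of disjoint closed subsets of $F$ (with $F'=\bigcap_\alpha(A_\alpha\cup B_\alpha)$ mapping onto $2^\kappa$) is standard and sound.

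The gap is that the successor step of your recursion is the entire mathematical content of \v{S}apirovski\v{\i}'s theorem, and you assert it rather than prove it. Two concrete points. First, the mechanism you do state --- ``in the interior of each cell the hypothesis $\pi\chi\ge\kappa$ supplies room to pick two points with disjoint neighbourhoods'' --- uses only Hausdorffness; the hypothesis $\pi\chi(x,F)\ge\kappa$ enters nowhere in your outline, and without it the step is simply false. Test it on $X=\omega+1$: of any pair of disjoint closed sets one member must be finite (a closed set omitting the point $\omega$ is finite), so each new pair creates finite cells, finite cells must be split by every later pair, and within finitely many stages a singleton cell appears that no disjoint pair can split --- even though your strengthened invariant ``every cell has nonempty interior'' is maintained throughout, since singletons of isolated points are open. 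So the invariant you propose is not the right one; what $\pi\chi(x,F)\ge\kappa$ actually buys must be used at every stage. Second, the step you explicitly defer --- assembling the pieces chosen in $<\kappa$ cells into two closed sets with disjoint closures --- is not a technicality that normality plus an elementary submodel will absorb: normality separates finitely many pieces, and for infinitely many cells the pieces can accumulate on a common point, exactly as you concede. In the genuine proof the hypothesis is invoked at precisely this moment, in contrapositive form: if no admissible new pair exists, then (via \v{S}apirovski\v{\i}'s lemma on $\pi$-bases, using a finite subcover by neighbourhoods containing no cell interior) one extracts a point at which the $<\kappa$ cell interiors contain a local $\pi$-base, contradicting $\pi\chi\ge\kappa$; there is additional bookkeeping (regular closed pairs, the behaviour of relative $\pi$-character when passing to closed subsets of $F$) that your sketch does not touch. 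Writing ``I expect to handle this as \v{S}apirovski\v{\i} does'' is an honest flag, but it means the crux of (iii)$\Rightarrow$(ii) is missing from the proposal.
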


\medskip

\subsection{Elementary Submodels}\label{SubS-ESM}
For the proof of the Main Theorem~\ref{MainTheorem} we will make heavy use of elementary submodels $M$ of $H(\theta)$, where $\theta$ is a large enough cardinal. We will also use the following properties about finally property K posets and extensions  by generic filters over structures.
\smallskip

It is well known (see \cite{dow2014reflecting}, \cite{kunen2014set}) that a finally property K poset $\mathbb{P}$ is $ccc$ ($\mathbb{Q} = \{ 1 \}$ is a complete subposet of $\mathbb{P}$ and $\mathbb{P}/\mathbb{Q} \simeq \mathbb{P}$).
Moreover, if $M\prec H(\theta)$, $\mathbb{P}\in M$ and $M^\omega\subseteq M$, then $\mathbb{P}_M = \mathbb{P}\cap M$ is a complete subposet of $\mathbb{P}$. %
This  implies that maximal antichains of $\mathbb{P}_M$ are  maximal antichains of $\mathbb{P}$, and it also implies that, if $G$ is a $\mathbb{P}$-generic filter, then $V[G]$ is obtained by forcing with $\mathbb{P}/\mathbb{P}_M = \mathbb{P}/(G\cap M)$ over the model
$V[G_M]$, where $G_M = G\cap \mathbb{P}_M$. 
Recall the facts (see \cite{dow2002recent}) that $M[G_M] \cap \mathcal{P}(\omega) = M[G] \cap \mathcal{P}(\omega)$, $M[G_M]$ is an elementary submodel of $H(\theta)[G_M] $ (this is simply $H(\theta)$ in the sense of $V[G_M]$), and that $M[G]$ (hence $M[G_M]$) is also closed under $\omega$-sequences in the universe $V[G_M]$.

\subsection{Trees}\label{SubTrees}
Here we introduce an important tool (a tree) that will be used in Lemmas \ref{l:*} and \ref{l:**}. 
In \cite{soukup2018infinite} Dániel Soukup and Lajos Soukup defined and contructed from the Jensen's principle $\square$ the {\it high} and {\it sage Davies-trees}. We opt to only state what we need from these trees.
\smallskip

Suppose GCH and $\square$ hold. Let $\kappa$ be a cardinal such that $\kappa^\omega = \kappa$ and let $x$ be any set. Then it is possible to recursively construct a tree $T_\kappa$ together with models $M_t$, for $ t\in T$, with the following requirements. 
The elements $t$ of $T_\kappa$ are finite functions with domain an integer into successor ordinals. 
The model $M_\emptyset$ will be the increasing union of its immediate successors and will have size $\kappa$.
Let $\kappa_t$ denote the cardinality of $M_t$.
Here we list the required properties about the tree $T_\kappa$:
\begin{enumerate}
    \item if $\kappa=\aleph_1$ then every $M_\alpha$ is countable;
    \item a node $t$ of $T_\kappa$ is maximal if and only if $M_t$ is countable;
    \item for every $t\in T_\kappa$, $x\in M_t$;
    \item given $t\in T_\kappa$, the sequence $\{ M_{t^\frown (\alpha+1)} : \alpha <\mbox{cf}(|M_t|)\}$ is a $\subseteq$-chain that unions up to $M_t$, and $\kappa_{t^\frown (\alpha+1)} < \kappa_t$;
    \item if $\kappa_t = \lambda^+$ with $\mbox{cf}(\lambda)= \omega$, then for every $\alpha < \mbox{cf}(\kappa_t)$, $M_{t^\frown (\alpha+1 ^\frown n)}$ is closed under $\omega$-sequences and $\kappa_{t^\frown (\alpha+1 ^\frown n)}$ is regular,  for each $n\in\omega$;
    \item  if $\kappa_t$ is any other cardinal, then $M_{t^\frown (\alpha+1)}$ is closed under $\omega$-sequences for all $\alpha<\mbox{cf}(\kappa_t)$, and $\kappa_{t^\frown (\alpha+1)} = \kappa_{t^\frown (\beta+1)}$.
\end{enumerate}

In \cite{soukup2018infinite}, clause (II) in the definition of high Davies-tree implies that $M_\emptyset$ has size $\kappa$ and is closed under $\omega$-sequences. In the proof of Theorem 14.1 \cite{soukup2018infinite}, their models $K_{\alpha\!+1}$ for Case I are the models $M_{t^\frown (\alpha+1)}$ for our item (5), and their models $K_{\alpha\!+1, j}$ for Case II are the models $M_{t^\frown (\alpha+1 ^\frown j)}$ for our item (4). 
Observe that we are only considering models that have successor index; if the index value for a model is limit we could not guarantee that the model is closed under $\omega$-sequences.
\medskip

Clearly $T_\kappa$ has no infinite branches and this is equivalent (see \cite{kunen2014set}) to saying that $T_\kappa$, with the reverse ordering, is well-founded. 
There is a rank function, $\mathop{rk}_{T_\kappa}$, on $T_\kappa$ where
$\mathop{rk}_{T_\kappa}(t)=0$ if $t$ is ma\-xi\-mal. 
For non-maximal $t$, the definition of $\mathop{rk}_{T_\kappa}(t)$ is minimal so that $\mathop{rk}_{T_\kappa}(t^\frown \alpha) < \mathop{rk}_{T_\kappa}(t)$ for all $t^\frown \alpha\in T_\kappa$.

\section{The main result}\label{MainResult}
We want to prove that if we force with a finally property K poset $\mathbb{P}$ over a model of GCH + $\square$, then in the extension every CSC space is weakly pseudoradial. 
We will present the proofs and results for  0-dimensional spaces  and leave the routine  changes needed to handle the  general case to the interested reader. 
So, we focus on separable 0-dimensional CSC spaces and for practical purposes we identify any countable dense set with $\omega$.
If $X$ is a 0-dimensional CSC space with dense set $\omega$ then there is a Boolean algebra $B_X$ on $\omega$ whose Stone space $S(B_X)$ is $X$ ($B_X$ is the Boolean algebra of the clopen sets of $X$ intersected with $\omega$).
\medskip

Throughout this section suppose $V$ is a model of GCH + $\square$, $\mathbb{P}$ has finally property K, $G$ is a $\mathbb{P}$-generic filter and, in $V[G]$, let $X$ be a separable 0-dimensional CSC space with dense set $\omega$. 
Let $\dot B_X$ be a family of nice $\mathbb{P}$-names of subsets of $\omega$ that is forced, by 1, to be the Boolean algebra on $\omega$ whose Stone space is $X$; 1 forces that $S(\dot B_X) = \dot X$ is CSC.
We may assume that the fixed ultrafilters of $\dot B_X$ are the elements of $\omega$ and that for all $n\neq m\in\omega$, there is a $\dot b\in \dot B_X$ satisfying that $1\Vdash |\dot b\cap \{n,m\}|=1$ (i.e. $\omega$ is dense but not necessarily discrete or open). 
\medskip

As suggested, we aim to prove that in the forcing extension the radial closure of $\omega$ (the countable dense set in $X$) is closed.
That is, if $\dot u$ is a $\mathbb{P}$-name for an ultrafilter on $\omega$ ($1\Vdash \dot u\in \omega^*$), prove that the $\dot u$-limit of $\omega$ is in $\omega^{(r)}$. 
To this end here is the key idea: we will get the desired $\dot u$-limit as being the limit of a well-ordered sequence of points in the radial closure of $\omega$, and these points are produced by using larger and larger elementary submodels. More concretely, we will use induction over $rk_{T_\kappa}$, for large enough $\kappa$, to get points in $\omega^{(r)}$ as in Definition \ref{d:*} until we obtain a converging sequence to the $\dot u$-limit.

\begin{notat}\label{n:A(u,W)}
Suppose $\dot u$ is a $\mathbb{P}$-name for an ultrafilter on $\omega$ and $\dot B$ is a list of $\mathbb{P}$-names such that 1 forces $\dot B$ is a Boolean algebra on $\omega$. 
For any countable family $\mathcal W$ of $\dot u$,
let $\mathcal A(\dot u,\mathcal W)$ denote the family of all nice $\mathbb{P}$-names $\dot a$ (of subsets of $\omega$) where 1 forces that $\dot a\subseteq^* \dot W$ for all $\dot W\in\mathcal W$, and $\dot a$ is a converging sequence in $S(\dot B)$. 
Of course $\mathcal A(\dot u,\emptyset)$ contains $\mathcal A(\dot u,\mathcal W)$ for all countable  $\mathcal W\subseteq \dot u$. For any $\dot a\in \mathcal A(\dot u,\emptyset)$, let $x_{\dot a}$ denote the limit point of $\dot a$ in $S(\dot B)$.
\end{notat}

We may think of $\mathcal A(\dot u,\mathcal W)$ as the collection of all sequences that converges to the $G_\delta$-set, $\bigcap \mathcal{W}$, which contains the $\dot u$-limit. By sequential compactness, these sets are non-empty.


\begin{lemma}\label{KeyLemma}
Suppose $M \prec H(\theta)$ has size $\aleph_1$, is closed under $\omega$-sequences and is the increasing union of a sequence of countable elementary submodels $\langle M_{\alpha} : \alpha\in\omega_1\rangle$. For each $\alpha\in\omega_1$, choose $\dot a_\alpha$ an element of $M\cap \mathcal A(\dot u,\left( M_\alpha\cap \dot u\right))$. Then there is a point $\dot x(\dot u, M)$ in the radial closure of $\omega$ satisfying that the sequence $\langle x_{\dot a_\alpha} : \alpha\in  \omega_1\rangle$ converges to $\dot x(\dot u,M)$. Moreover, $\dot x (\dot u, M)$ does not depend on the choice of the $\dot a_\alpha$'s.
\end{lemma}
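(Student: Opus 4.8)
The plan is to obtain $\dot x(\dot u,M)$ as the unique cluster point of the $\omega_1$-sequence $\langle x_{\dot a_\alpha}:\alpha\in\omega_1\rangle$, and to reduce the whole statement to a single non-splitting assertion. First I would invoke the factorization from Section~\ref{SubS-ESM}: since $M$ is closed under $\omega$-sequences and $\mathbb P$ has finally property K, $\mathbb P_M=\mathbb P\cap M$ is a complete subposet, $G_M=G\cap\mathbb P_M$ is generic, and in $V[G_M]$ the quotient $\mathbb Q=\mathbb P/G_M$ has property K; moreover $M[G_M]\prec H(\theta)[G_M]$, $M[G_M]\cap\mathcal P(\omega)=M[G]\cap\mathcal P(\omega)$, and $M[G]$ is $\omega$-closed in $V[G_M]$. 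Working in $V[G]$, a length-$\omega_1$ sequence in a compact $0$-dimensional space converges (radially: every neighborhood of the limit omits only countably many terms) if and only if it has a unique cluster point, and by compactness it always has at least one. Since for the clopen set $\widehat b\subseteq X$ determined by $b\in B_X$ one has $x_{\dot a_\alpha}\in\widehat b$ exactly when $a_\alpha\subseteq^{*}b$, uniqueness of the cluster point is in turn equivalent to the assertion that \emph{no $b\in B_X$ splits $\{x_{\dot a_\alpha}:\alpha\in\omega_1\}$ into two uncountable pieces}. Thus the entire lemma rests on this non-splitting claim.

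For clopen sets ``coming from $M$'' the claim is routine. If the name of $b$ lies in $M$, then $b$ belongs to $M_\alpha$ from some $\alpha_0$ on, and the defining property of $\dot a_\alpha\in\mathcal A(\dot u,M_\alpha\cap\dot u)$ forces $a_\alpha$ to be $\subseteq^{*}$ the $\dot u$-large members of $M_\alpha$; hence for $\alpha\ge\alpha_0$ the point $x_{\dot a_\alpha}$ is trapped on the side of $\widehat b$ dictated by $\dot u$, so $b$ cannot split. In particular every cluster point lies in the closed set $K=\bigcap\{\widehat b: b\in \dot u\cap M[G_M]\}$, which also contains the $\dot u$-limit.

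The main obstacle is the non-splitting claim for clopen sets whose names are \emph{not} captured by $M$, and this is exactly where property K of the quotient is indispensable: mere $ccc$ would not suffice, which is why we assume \emph{finally} property K. Suppose toward a contradiction that some $\mathbb Q$-name $\dot b$ splits the sequence, giving uncountable $S_0=\{\alpha:a_\alpha\subseteq^{*}b\}$ and $S_1=\{\alpha:a_\alpha\subseteq^{*}b^{c}\}$. For $\alpha\in S_1$ the relation $a_\alpha\subseteq^{*}b^{c}$ together with $a_\alpha\subseteq^{*}W$ for all $W\in\dot u\cap M_\alpha$ says that $b^{c}$ is compatible with the filter generated by $\dot u\cap M_\alpha$, and symmetrically for $S_0$ and $b$. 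I would then, in $V[G_M]$, choose for each $\alpha$ in an uncountable piece a condition of $\mathbb Q$ forcing the corresponding side, apply property K to extract an uncountable linked subfamily, and use the $\omega$-closure of $M[G]$ in $V[G_M]$ together with the elementarity of $M[G_M]$ to absorb the splitting data into $M[G_M]$. This exhibits a set of $B_X\cap M[G_M]$ that splits the sequence, contradicting the previous paragraph (equivalently, it produces an $\omega$-subsequence converging off the side prescribed by the choice of the $a_\alpha$, contradicting $\dot a_\alpha\in\mathcal A(\dot u,M_\alpha\cap\dot u)$). Pinning down this absorption step is the technical heart of the proof.

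Granting the non-splitting claim, $\langle x_{\dot a_\alpha}\rangle$ has a unique cluster point, which I take to be $\dot x(\dot u,M)$, and it converges to it. Since each $a_\alpha\subseteq\omega$ is a convergent sequence, $x_{\dot a_\alpha}\in\omega^{(r)}$; as $\omega^{(r)}$ is radially closed and $\langle x_{\dot a_\alpha}:\alpha\in\omega_1\rangle\subseteq\omega^{(r)}$ converges, its limit lies in $\omega^{(r)}$ as well. Finally, for independence from the choices, given a second choice $\langle\dot a'_\alpha\rangle$ with limit $y'$, I would interleave the two sequences into a single $\omega_1$-sequence which, after reindexing along the filtration $\langle M_\alpha\rangle$, still meets the hypotheses of the lemma; the non-splitting claim applies to it, so it too converges to a single point, forcing $y=y'$. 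This shows $\dot x(\dot u,M)$ is well defined.
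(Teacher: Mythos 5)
Your framework matches the paper's in outline: factor through $\mathbb P_M$, handle clopen sets whose names lie in $M$ via the ultrafilter $\dot u_M$ (the paper's Facts 1 and 2), reduce convergence to a non-splitting assertion about arbitrary $\dot b\in\dot B_X$, and get uniqueness by interleaving. But there is a genuine gap precisely where you say there is one: the ``absorption step'' you defer as ``the technical heart of the proof'' is not a detail --- it \emph{is} the lemma. The entire content of the paper's proof is the preservation argument for names not captured by $M$, and the one concrete thing you assert about how your version would conclude is wrong: pairwise compatibility of conditions cannot be expected to exhibit a splitter in $B_X\cap M[G_M]$. Compatibility only controls subsets of $\omega$, so the most absorption can yield is a set $c\in\mathcal P(\omega)\cap M[G_M]$ almost containing the $a_{\alpha_\gamma}$'s and almost disjoint from the $a_{\beta_\gamma}$'s; nothing makes $c$ a clopen set of $X$, so the contradiction cannot come from your ``previous paragraph'' about clopen sets with names in $M$ --- it must come from the defining $\subseteq^*$-property of $\mathcal A(\dot u, M_\alpha\cap\dot u)$ together with the fact that $u_M$ is an ultrafilter over $V[G_M]$ (the route your parenthetical only gestures at).

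Two specific ingredients needed to make any version of this work are missing. First, before invoking property K you must uniformize the finite bounds: choose $p_\gamma$, indices $\gamma\le\alpha_\gamma,\beta_\gamma$, and an integer $m_\gamma$ with $p_\gamma\Vdash \dot a_{\alpha_\gamma}\setminus\dot b\subseteq\check m_\gamma$ and $\dot a_{\beta_\gamma}\cap\dot b\subseteq\check m_\gamma$, then thin to a constant $m$. Without this, linkedness gives nothing: two compatible conditions forcing $\dot a\subseteq^*\dot b$ and $\dot a'\cap\dot b=^*\emptyset$ put no constraint whatsoever on $\dot a\cap\dot a'$. Second, you need Fact-2-type input to close the argument. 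The paper does it by taking $U=\bigcup\{a_{\alpha_\gamma}:\gamma\in\Lambda\cap\delta\}\in u_M$, choosing $\gamma\in\Lambda\setminus\delta$ and $k>m$ with $k\in a_{\beta_\gamma}\cap U$ (possible since $a_{\beta_\gamma}\subseteq^*U$ once $U$ has a name in $M_{\beta_\gamma}\cap\dot u$), and then $\eta\in\Lambda\cap\delta$ with $k\in a_{\alpha_\eta}$; then $p_\eta\Vdash\check k\in\dot b$ while $p_\gamma\Vdash\check k\notin\dot b$, so $p_\eta\perp p_\gamma$, contradicting linkedness outright. (Your absorption route can in fact be completed: with the uniform $m$, set $c=\bigcup_{\gamma\in\Lambda}(a_{\alpha_\gamma}\setminus m)$; linkedness forces $c$ to be disjoint from every $a_{\beta_\eta}\setminus m$, and then whichever of $c$, $\omega\setminus c$ lies in $u_M$ contradicts the $\subseteq^*$-requirement on a tail of the $\dot a$'s. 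But none of this is in your write-up.) As it stands, you have correctly identified the battlefield without fighting the battle.
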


\begin{proof}
We have the sequence $\{ \val_{G}(\dot a_\alpha): \alpha\in\omega_1\}$ in the model $V[G_M]$. (This sequence is not necessarily in the elementary submodel $M[G]$ as it is not required to be closed under $\omega_1$-sequences, see Subsection \ref{SubS-ESM}.)

\begin{fact}
$\dot u_{M} =  \{ \val_{G}(\dot U) : \dot U\in
\dot u\cap M\}$ is a $\mathbb{P}_M$-name of an ultrafilter on $\omega$ (i.e. $\val_{G_M}(\dot u_M) = \val_{G}(\dot u)\cap V[G_M]$ is an ultrafilter on $\omega$).
\end{fact}
First let us observe that $\mathcal{P}(\omega) \cap V[G_M] \subseteq M[G_M]$. In fact, if $\dot C$ is $\mathbb{P}_M$-nice name for a subset of $\omega$ in $V[G_M]$ then $\dot C$ is a countable subset of $\omega\times \mathbb{P}_M$ because $\mathbb{P}_M$ is $ccc$. This implies that $\dot C \subseteq M$ and since $M$ is closed under $\omega$-sequences, $\dot C \in M$. Thus, $\val_{G_M}(\dot C) \in M[G_M]$.

It remains to prove that $\val_{G_M}(\dot u_M)$ is {\it ultra} over $M[G_M]$. Note that $\dot u$ is forced to be an ultrafilter on $\omega$, that is, $1 \Vdash \forall C \in [\omega]^\omega \ (C\in \dot u \text{ or } \omega \setminus C \in \dot u)$. 
As $\dot u \in M$ and $\Vdash$ is definable within $M$, the formula ``for every $\mathbb{P}_M$-name $\dot C$ for a subset of $\omega$, $1 \Vdash_{\mathbb{P}_M} \dot C \in u \vee \dot{\omega \setminus C} \in \dot u$'' holds in $M$. 
Thus, $M[G_M]$ satisfies that $\val_{G_M}(\dot u_M)$ is an ultrafilter on $\omega$, and so does $V[G_M]$ since $\mathcal{P}(\omega) \cap V[G_M] \subseteq M[G_M]$. This finishes Fact 1.
\medskip

In the following we will see that the sequence $\langle \val_{G_M} ( x_{\dot a_\alpha}) : \alpha\in  \omega_1\rangle$ converges to a unique point in the radial closure of $\omega$. Work in $V[G_M]$.

\begin{fact}
If $\Lambda$ is any uncountable subset of $\omega_1$, then there is a $\delta <\omega_1$ such that $\bigcup \{ \val_{G_M}(\dot a_\alpha) :\alpha \in \Lambda\cap \delta\}$ is an element of $\val_{G_M}(\dot u_M)$. 
\end{fact}

The set $\bigcup \{ \val_{G_M}(\dot a_\alpha) : \alpha \in \Lambda \}$ is countable, of course there is a $\delta$ so that $U= \bigcup \{ \val_{G_M}(\dot a_\alpha) :\alpha \in \Lambda\cap \delta\} = \bigcup \{ \val_{G_M}(\dot a_\alpha) :\alpha \in \Lambda\}$,
and then there is an $\alpha\in \Lambda,$ large enough so that $U$ and $\omega\setminus U$ are the evaluations of some $\mathbb{P}_M$-names in $M_\alpha$. 
If $U$ is not in $\val_{G_M}(\dot u_M)$ then $\omega \setminus U \in \val_{G_M}(\dot u_M)$. 
Since $\dot a_\alpha \in M\cap \mathcal{A}(\dot u, \dot u \cap M_\alpha)$, this implies that $\val_{G_M}(\dot a_\alpha)$ is mod finite contained in $\omega \setminus U$, contradicting $\val_{G_M}(\dot a_\alpha) \subseteq^* U$. This concludes Fact 2.

\medskip
It follows that for each clopen set captured by $M$ ($\dot b \in \dot B_X \cap M$) there is $\beta <\omega_1$ so that for every $\alpha \in \Lambda \setminus \beta$, $\val_{G_M}(\dot x_{\dot a_\alpha}) \in \dot b$. That is, $\langle \val_{G_M}(\dot x_{\dot a_\alpha}) : \alpha\in\Lambda\rangle$ converges with respect to all $\dot b\in \dot B_X\cap M$, (i.e. we may simply consider those $\dot b\in \dot B_X\cap \dot u$. Fact 2 shows that all but countably many $\val_{G_M}(\dot a_\alpha)$ are mod finite contained in $\val_{G_M}(\dot b$)). 
\medskip

Now we must prove that this convergence property is preserved by the tail forcing $\mathbb{P}/\mathbb{P}_M$.
Let $\dot b$ be any member of $\dot B_X$. Note
that $\dot b$ is forced by 1 not to \textit{split} any $\dot a_\alpha$ (these are converging sequences).
Towards a contradiction, let us assume that there is some condition $p\in \mathbb{P}/\mathbb{P}_M$ that
forces ``$\dot b$ mod finite contains uncountably many $\dot a_\alpha$, and is mod finite disjoint from uncountably many $\dot a_\beta$''. 
For each $\gamma<\omega_1$, choose any extension $p_\gamma \in \mathbb{P}/\mathbb{P}_M$ of $p$ together with
$\gamma\leq\alpha_\gamma,\beta_\gamma $ so that
there is an $m_\gamma$ satisfying
$p_\gamma\Vdash \dot a_{\alpha_\gamma}\setminus \dot b\subseteq \check m_\gamma \ \
\mbox{and} \ \
  \dot a_{\beta_\gamma}\cap \dot b\subset \check m_\gamma~~.$
Choose an uncountable $\Lambda\subseteq\omega_1$ so that for all $\gamma,\eta\in \Lambda$,  $m := m_\gamma=m_\eta$ and $p_\gamma\not\perp p_\eta$ (here we have used the fact that $\mathbb{P}/\mathbb{P}_M$ is forced by 1 to have property K). 
Choose $\delta<\omega_1$ as in Fact 2 for the sequence $\{ \val_G(\dot a_{\alpha_\gamma}) : \gamma\in \Lambda\}$, and let $U=\bigcup\{\val_G(\dot a_{\alpha_\gamma}) : \gamma\in \Lambda\cap \delta\}$ (note that since $\dot a_\alpha$ are countable sets, $\val_{G_M}(\dot a_\alpha) = \val_G (\dot a_\alpha)$).
We know that $U$ is in $\val_{G_M}(\dot u_M)$ so we can choose $\gamma\in \Lambda \setminus \delta$ large enough and $k > m$ with $k\in \val_{G_M}(\dot a_{\beta_\gamma})\cap U$. 
Choose $\eta\in \Lambda\cap \delta$ such that $k\in \val_G(\dot a_{\alpha_\eta})$. Then on one hand, we have that $p_\eta \Vdash \check k \in \dot b$, and on the other hand $p_\gamma \Vdash \check k \notin \dot b$. That is, $p_\eta \perp p_\gamma$, this is the desired contradiction. It follows then that there is a $\mathbb{P}$-name $\dot x (\dot u, M)$ so that $V[G]\models \langle \val_{G}(\dot x_{\dot a_\alpha}) : \alpha < \omega_1 \rangle$   converges to $ \val_{G}( \dot x (\dot u, M))$.
\medskip

 As for the uniqueness, simply check that
if $\mathcal S_1 = \{\dot a_\alpha: \alpha\in\omega_1\}$ and $\mathcal S_2 = \{\dot c_\alpha : \alpha\in\omega_1\}$ are two such sequences, there is a third $\mathcal S_3=\{\dot d_\alpha : \alpha \in\omega_1\}$ (for example, alternate the sequences $\dot a_\alpha$ and $\dot c_\alpha$) satisfying that $\mathcal S_1\cap \mathcal S_3$ and $\mathcal S_2\cap \mathcal S_3$ are both uncountable and have the same limits.
\end{proof}

\begin{remark}\label{Notation:X}
For larger models $M$ (that is, for $rk_{T_\kappa} (M) > 1$) we want to define analogues of $\dot x(\dot u,M)$. 
This definition will depend on the cofinality of $|M|$. 
When the cofinality is $\omega$, we will rather define an entire family $\mathcal X(\dot u, M)$ consisting of limits of converging $\omega$-sequences of the form $\langle \dot x(\dot u, M_n) : n\in \dot L\rangle$ where the sequence $\langle M_n : n\in\omega\rangle$ is an increasing chain that unions up to $M$ and $\dot L$ is any $\mathbb{P}$-name of an infinite subset of $\omega$. 
In the case when $\lambda = |M|$ is the successor of a cardinal with cofinality $\omega$, then $\mathcal X(\dot u, M)$ will be $\{ \dot x(\dot u, M)\}$ but its definition will be as a $\lambda$-limit of a  choice of members of $M\cap \mathcal X(\dot u,M_{\alpha\!+1})$ where $\{ M_{\alpha\!+1} : \alpha < \mbox{cf}(\lambda)\}$ is an increasing chain for $M$.
Finally, when $\lambda=|M|$ is any other cardinal, then $\mathcal X(\dot u, M) = \{ \dot x(\dot u,M)\}$ should be the limit of the sequence $\langle \dot x(\dot u,M_{\alpha\!+1}) : \alpha < \mbox{cf}(\lambda) \rangle$ for an increasing chain $\{M_\alpha : \alpha < \mbox{cf}(\lambda)\}$ for $M$.
Proving that these sequences radially converge within
$V[G_M]$ is not difficult, but we must again prove that
$\mathbb{P}/\mathbb{P}_M$ will preserve this convergence. 
\end{remark}

\begin{definition}\label{d:*}
Suppose $\kappa$ is a cardinal and that we have constructed a tree $T_\kappa$ as in Subsection \ref{SubTrees}. For $s\in T_\kappa$ we define the following statement:
\smallskip

($\star$)$_s$ if $\mbox{cf}(\kappa_s)>\omega$, then $1 \Vdash \dot x(\dot u, M_s)$ is in the closure of the limit points of members of $\mathcal A(\dot u,\mathcal W)\cap M_s$, where $\mathcal{W} \subseteq \dot u \cap M_s$ is countable.
\end{definition}

\begin{lemma}\label{l:*}
Fix $t\in T_\kappa$ and suppose \textnormal{($\star$)}$_s$ holds for every $s \supsetneq t$.
Assume that $\kappa_t>\aleph_1$ has uncountable
cofinality  and that $\dot b\in \dot B$.
Then the set of  $\gamma\in \kappa_t$ for which  there are a $p_\gamma$ and  pairs   $\alpha_\gamma,\dot x_{\gamma,0}$ and  $\beta_\gamma,\dot x_{\gamma,1}$ satisfying
  \begin{enumerate}
  \item $\gamma \leq \alpha_\gamma\leq \beta_\gamma<\kappa_t$,
  \item $  \dot x_{\gamma,0}$ is in $\mathcal X(\dot \mu,M_{t^\frown
    (\alpha_\gamma +1)})$,
  \item $  \dot x_{\gamma,1}$ is in $\mathcal X(\dot \mu,M_{t^\frown
    (\beta_\gamma +1)})$,
  \item $p_\gamma\Vdash \dot b\in \dot x_{\gamma,0}$,
    \item $p_\gamma\Vdash \dot b\notin \dot x_{\gamma,1}$,    
  \end{enumerate}
is bounded in $\kappa_t$.
\end{lemma}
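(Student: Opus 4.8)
The plan is to argue by contradiction and to reproduce, one level higher in the tree $T_\kappa$, the tail--forcing preservation step of Lemma~\ref{KeyLemma}. Since $\mathrm{cf}(\kappa_t)>\omega$, the model $M_t=\bigcup_{\alpha<\mathrm{cf}(\kappa_t)}M_{t^\frown(\alpha+1)}$ is closed under $\omega$-sequences (Subsection~\ref{SubTrees}), so $\mathbb{P}_{M_t}$ is a complete subposet of $\mathbb{P}$, the quotient $\mathbb{P}/\mathbb{P}_{M_t}$ is forced to have property K, and $V[G]$ is the $\mathbb{P}/\mathbb{P}_{M_t}$-extension of $V[G_{M_t}]$ (Subsection~\ref{SubS-ESM}). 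Exactly as in Fact~1, $\dot u_{M_t}=\val_G(\dot u)\cap V[G_{M_t}]$ is an ultrafilter on $\omega$ in $V[G_{M_t}]$. All of the combinatorics below will be run in $V[G_{M_t}]$ with the tail poset $\mathbb{P}/\mathbb{P}_{M_t}$, where every $\val_G(\dot a)$ occurring is a fixed subset of $\omega$, its name being captured by $M_t$. Suppose toward a contradiction that the set $S$ of $\gamma$ admitting witnesses $p_\gamma,\alpha_\gamma,\beta_\gamma,\dot x_{\gamma,0},\dot x_{\gamma,1}$ as in (1)--(5) is unbounded in $\kappa_t$; then $|S|\ge\mathrm{cf}(\kappa_t)\ge\aleph_1$.

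The first step is to replace the points $\dot x_{\gamma,i}$ by honest converging sequences. Because $t^\frown(\alpha_\gamma+1)$ and $t^\frown(\beta_\gamma+1)$ properly end-extend $t$, the inductive hypothesis \textnormal{($\star$)} is available for them: directly when the relevant model has uncountable cofinality, and otherwise through the $\omega$-closed models $M_{t^\frown(\alpha_\gamma+1^\frown n)}$ of clause~(5), in terms of which every member of $\mathcal X$ is defined. Using that $\dot b$ is clopen, I extend each $p_\gamma$ to $q_\gamma\in\mathbb{P}/\mathbb{P}_{M_t}$ deciding nice names $\dot a_{\gamma,0}\in\mathcal A(\dot u,\mathcal W_{\gamma,0})\cap M_{t^\frown(\alpha_\gamma+1)}$ and $\dot a_{\gamma,1}\in\mathcal A(\dot u,\mathcal W_{\gamma,1})\cap M_{t^\frown(\beta_\gamma+1)}$, together with an integer $m_\gamma$, so that $q_\gamma\Vdash\dot a_{\gamma,0}\setminus\dot b\subseteq\check m_\gamma$ and $q_\gamma\Vdash\dot a_{\gamma,1}\cap\dot b\subseteq\check m_\gamma$. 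Crucially, I am free to choose the countable families $\mathcal W_{\gamma,i}\subseteq\dot u\cap M_{t^\frown(\cdot+1)}$, and this freedom will be used decisively.

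With the sequences in hand the endgame is that of Lemma~\ref{KeyLemma}. After thinning $S$ so that $m_\gamma=m$ is constant and, by property K of $\mathbb{P}/\mathbb{P}_{M_t}$, so that the $q_\gamma$ are pairwise compatible on an uncountable $\Lambda$, it suffices to produce $\eta,\gamma\in\Lambda$ and an integer $k>m$ with $k\in\val_G(\dot a_{\eta,0})\cap\val_G(\dot a_{\gamma,1})$: then $q_\eta\Vdash\check k\in\dot b$ while $q_\gamma\Vdash\check k\notin\dot b$, contradicting $q_\eta\not\perp q_\gamma$. As before, such a $k$ is extracted from a Fact~2--type statement: some initial union $U=\bigcup\{\val_G(\dot a_{\gamma,0}):\gamma\in\Lambda\cap\delta\}$ should lie in $\dot u_{M_t}$, and an outside sequence $\dot a_{\gamma,1}\subseteq^* U$ then meets $U$ above $m$.

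Establishing this Fact~2 analogue is the main obstacle, and it is where the uncountable cofinality forces a new idea. At the base level the $\dot a_\alpha$ were mod-finite inside \emph{all} of $\dot u\cap M_\alpha$, making Fact~2 automatic; now $M_{t^\frown(\alpha+1)}$ is uncountable, $\dot u\cap M_{t^\frown(\alpha+1)}$ has no countable base, and \textnormal{($\star$)} only licenses containment in a \emph{countable} $\mathcal W$. The remedy is to absorb the relevant complements into the $\mathcal W_{\gamma,i}$ in advance, exploiting that an increasing union of subsets of $\omega$ takes only countably many distinct values: when choosing $\dot a_{\gamma,0}$ I schedule into $\mathcal W_{\gamma,0}$ every complement $\omega\setminus U_{\gamma'}$ ($\gamma'<\gamma$) lying in $\dot u\cap M_{t^\frown(\alpha_\gamma+1)}$, where $U_{\gamma'}$ is the union of the earlier inside sequences; these are countably many, so $\mathcal W_{\gamma,0}$ stays countable. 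Then, were no $U_\delta$ in $\dot u_{M_t}$, the stabilized complement $\omega\setminus U^{*}$ would lie in $\dot u_{M_t}$, be captured by some $M_{t^\frown(\rho+1)}$, and hence belong to $\mathcal W_{\gamma,0}$ for large $\gamma$; for such $\gamma$ one has both $\val_G(\dot a_{\gamma,0})\subseteq U^{*}$ (stabilization) and $\val_G(\dot a_{\gamma,0})\subseteq^*\omega\setminus U^{*}$, forcing $\dot a_{\gamma,0}$ finite, which is absurd. Symmetrically, pre-loading the countably many values $U_{\gamma'}$ into the $\mathcal W_{\gamma,1}$ makes the outside sequence mod-finite inside the good $U$. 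The delicate point I expect to have to nurse is the interaction between this recursive choice of the $\mathcal W_{\gamma,i}$ and the later property~K thinning to $\Lambda$: the absorption must be organised along the cofinal index set so that the unions relevant to the eventual $\Lambda$ are among the countably many values already handled. Granting this, the displayed contradiction shows that $S$ is bounded in $\kappa_t$.
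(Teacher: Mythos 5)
Your opening and closing moves coincide with the paper's: argue by contradiction, use ($\star$)$_s$ at the proper extensions of $t$ (through the $\omega$-chain of clause (5) when the relevant cofinality is countable) to replace the points $\dot x_{\gamma,i}$ by sequences $\dot a_{\gamma,i}$ and conditions $q_\gamma$ forcing them mod-finite inside/outside the clopen $\dot b$, and then look for two compatible conditions deciding $\check k\in\dot b$ oppositely. The genuine gap is exactly where you flag it: the Fact~2 analogue. There are two concrete obstructions. First, your absorption instruction --- ``schedule into $\mathcal W_{\gamma,0}$ every complement $\omega\setminus U_{\gamma'}$ lying in $\dot u\cap M_{t^\frown(\alpha_\gamma+1)}$'' --- is not a well-defined step of a recursion carried out in $V$: whether $\omega\setminus U_{\gamma'}$ belongs to $\dot u$ is decided by the generic filter, not by $1$, whereas membership in $\mathcal W_{\gamma,0}\subseteq \dot u$ requires a name that is (forced to be) a member of $\dot u$; you would need conditional names or to work below specific conditions, and neither is addressed. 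Second, and fatally for the bookkeeping, the set $\Lambda$ is produced by property~K in $V[G_{M_t}]$ \emph{after} all the names $\dot a_{\gamma,i}$ and families $\mathcal W_{\gamma,i}$ have been fixed; different uncountable $\Lambda$'s give different stabilized unions $U^{*}(\Lambda)$ (there are $2^{\aleph_1}$ candidates), so the countably many values of the increasing union along the eventual $\Lambda$, and their jump points, are simply not available at the time the $\mathcal W_{\gamma,i}$ must be chosen. This circularity --- choices before unions, unions before thinning, absorption needing both --- is precisely what ``Granting this'' hides, and I do not see how to close it within your scheme.

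The paper resolves the difficulty with a device your proof lacks, which removes any need for a Fact~2 at level $t$. Fix an auxiliary $\bar M\prec H(\theta)$ of size $\aleph_1$, closed under $\omega$-sequences, containing $\dot u$, $\mathbb{P}$, $\dot b$, the whole witness sequence $\mathcal S$, $\Gamma$, and the chain $\{M_{t^\frown(\xi+1)}:\xi<\mbox{cf}(\kappa_t)\}$. Enumerate $\dot u\cap(\bar M\cap M_t)$ as $\{\dot U_\delta:\delta<\omega_1\}$ and take the countable families to be the \emph{initial segments} $\mathcal W_\delta=\{\dot U_\beta:\beta<\delta\}$ of this single enumeration, choosing increasing $\gamma_\delta\in\Gamma\cap\bar M$ with $\mathcal W_\delta\in M_{t^\frown(\gamma_\delta+1)}$; then run your ($\star$)-step inside $\bar M$ to get $q_{\gamma_\delta}$, $\dot a_{\alpha_{\gamma_\delta}}$, $\dot a_{\beta_{\gamma_\delta}}$. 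Now take a countable chain $\langle\bar M_\xi:\xi<\omega_1\rangle$ unioning up to the $\aleph_1$-sized $\omega$-closed model $\bar M\cap M_t$ and capturing these objects; on a club $C$ one has $\dot u\cap\bar M_\delta=\mathcal W_\delta$, so for $\delta\in C$ the chosen sequences are legitimate choices in the hypothesis of Lemma~\ref{KeyLemma} for the model $\bar M\cap M_t$. By ccc the set of $\delta\in C$ with $q_{\gamma_\delta}\in G$ is forced to be uncountable, and then the \emph{uniqueness} of the limit $\dot x(\dot u,\bar M\cap M_t)$ in Lemma~\ref{KeyLemma} is contradicted: uncountably many of the limits lie in the clopen set $\dot b$ and uncountably many lie outside it, yet any interleaving of the two families must converge to the same point. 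In other words, the coherence you are trying to engineer by absorption is obtained for free by making the $\mathcal W_\delta$'s initial segments of one $\omega_1$-enumeration inside $\bar M$; the property~K argument is then quoted as a black box inside Lemma~\ref{KeyLemma}, and level $t$ itself needs only ccc.
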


\begin{proof} 
Assume that the sequence
$\mathcal S =\langle \{ p_\gamma,\alpha_\gamma,\beta_\gamma, \dot x_{\gamma,0}, \dot x_{\gamma,1}\} : \gamma \in \Gamma\rangle$ is a collection satisfying items (1)-(5) of the statement of the Lemma. We can further assume that for consecutive $\gamma < \gamma'$ in $\Gamma$, $\gamma < \alpha_\gamma < \beta_\gamma < \gamma '$.
Towards a contradiction, assume that $\Gamma$ is cofinal in $\mbox{cf}(\kappa_t)$. 
Fix any model $\bar M\prec H(\theta)$ of cardinality $\aleph_1$, closed under $\omega$-sequences, and satisfying that $\{ \dot u, \mathbb{P}, \dot b, \mathcal S, \Gamma, T_\kappa, \{ M_{t^\frown \xi \!+1 }: \xi <\mbox{cf}(\kappa_t)\} \} \subseteq \bar M$. 
     
Let $\lambda = \sup (\bar M\cap \kappa_t)$; $\bar M \cap \Gamma$ is cofinal in $\lambda$. Let $\{ \dot U_\delta : \delta < \omega_1\}$   be an   enumeration for $\dot u\cap (\bar M\cap M_t)$. 
By induction on $\delta\in\omega_1$, choose a strictly increasing sequence $\{ \gamma_\delta : \delta < \omega_1 \} \subseteq \Gamma\cap \bar M$ so that $\mathcal W_\delta =     \{ \dot U_\beta : \beta <\delta\}$     is an element   of $M_{t^\frown (\gamma_\delta+1)}$. Since $\bar M^\omega \subseteq \bar M$, $\mathcal{W}_\delta \in \bar M\cap M_{t^\frown (\gamma_\delta+1)}$. 
Observe that $\gamma_\delta, \alpha_{\gamma_\delta}, \beta_{\gamma_\delta} \in \bar M$ and therefore $M_{t^\frown (\gamma_\delta+1)}, M_{t^\frown (\alpha_{\gamma_\delta}+1)}$ and $M_{t^\frown (\beta_{\gamma_\delta}+1)}$ are also in $\bar M$, for $\delta < \omega_1$.
     
Fix any $\delta \in \omega_1$. We want to pick from $\mathcal A( \dot u,\mathcal  W_\delta)\cap (\bar M\cap M_t)$ sequences that are almost contained in $\dot b$ and sequences that are almost disjoint from $\dot b$, this will lead to a contradiction. To do so, we have two cases for $\mbox{cf}(\kappa_{t^\frown (\alpha_{\gamma_\delta}+1)})$.

\begin{case1}\label{Case1}
$\mbox{cf} (\kappa_{t^\frown (\alpha_{\gamma_\delta}+1)}) > \omega$. 
\end{case1}

By the definition of $\mathcal{X}(\dot u, M_{t^\frown (\alpha_{\gamma_\delta}+1)})$, in this case we know that this set consists of a unique point, thus 1 forces that $\dot x_{\gamma_\delta, 0}$ coincides with $\dot x(\dot u, M_{t^\frown (\alpha_{\gamma_\delta}+1)})$.
Using (2), (4) and ($\star$)$_{t^\frown (\alpha_{\gamma_\delta}+1)}$ we have that $H(\theta)$ satisfies that $p_{\gamma_\delta} \Vdash \dot x(\dot u, M_{t^\frown (\alpha_{\gamma_\delta}+1)})$ is in the closure of the limit points of members of $\mathcal A(\dot u,\mathcal W_{\delta})\cap M_{t^\frown (\alpha_{\gamma_\delta}+1)}$, and $p_{\gamma_\delta} \Vdash \dot b \in \dot x(\dot u, M_{t^\frown (\alpha_{\gamma_\delta}+1)})$. 
Hence, $H(\theta)$ satisfies that there is a convergent sequence and an extension of $p_{\gamma_\delta}$ that forces the sequence to be almost contained in $\dot b$. Since all required parameters for reflection ($p_{\gamma_\delta}, \dot u, \mathcal{W}_\delta, M_{t^\frown (\alpha_{\gamma_\delta}+1)}$) are in $\bar M$,
$\bar M$ also satisfies there is $\dot a_{\alpha_{\gamma_\delta}}\in \mathcal A( \dot u,\mathcal W_\delta)\cap M_{t^\frown (\alpha_{\gamma_\delta}+1)}$ and a $q'_{\gamma_\delta}<p_{\gamma_\delta}$ so that $q'_{\gamma_\delta} \Vdash \dot a_{\alpha_{\gamma_\delta}} \subseteq^* \dot b$.

\medskip
By the property \ref{SubTrees}.(6) of $T_\kappa$ we have $\kappa_{t^\frown (\alpha_{\gamma_\delta}+1)} = \kappa_{t^\frown (\beta_{\gamma_\delta}+1)}$ and this implies $\mbox{cf} (\kappa_{t^\frown (\beta_{\gamma_\delta}+1)}) > \omega$. So, similarly using (3), (5) and ($\star$)$_{t^\frown (\beta_{\gamma_\delta}+1)}$, $\bar M$ satisfies that there is $\dot a_{\beta_{\gamma_\delta}}\in \mathcal A( \dot u,\mathcal W_\delta)\cap M_{t^\frown (\beta_{\gamma_\delta}+1)}$ and $q_{\gamma_\delta} <q'_{\gamma_\delta}$ such that $q_{\gamma_\delta} \Vdash \dot a_{\alpha_{\gamma_\delta}} \subseteq^* \dot b \text{ and } \dot a_{\beta_{\gamma_\delta}}\cap \dot b =^*\emptyset$.

\begin{case2}
$\mbox{cf} (\kappa_{t^\frown (\alpha_{\gamma_\delta}+1)}) = \omega$. 
\end{case2}
Denote by $\langle M_n : n\in \omega \rangle$ the sequence $\langle M_{t^\frown (\alpha_{\gamma_\delta}+1 ^\frown n)} : n\in \omega \rangle$ for $M_{t^\frown (\alpha_{\gamma_\delta}+1)}$ (recall each $M_n$ has regular cardinality, \ref{SubTrees}.(5)). 
Since $M_{t^\frown (\alpha_{\gamma_\delta}+1)}$ is the $\subseteq$-increasing union of the $M_n$'s (\ref{SubTrees}.(4)) and $\mathcal{W}_\delta \in M_{t^\frown (\gamma_\delta +1)} \subseteq  M_{t^\frown (\alpha_{\gamma_\delta}+1)}$, there is $k\in \omega$ such that for all $n\geq k$, $\mathcal{W}_\delta \in M_n$ and hence $\mathcal{W}_\delta \subseteq M_n$.
Also, as $\dot x_{\gamma_\delta, 0}$ is an element of $\mathcal{X}(\dot u, M_{t^\frown (\alpha_{\gamma_\delta}+1)}) \cap \bar M$ there is a $\mathbb{P}$-name, $\dot L \in \bar M$, of an infinite subset of $\omega$ such that $p_{\gamma_\delta}$ forces that the sequence $\langle \dot x(\dot u, M_n) : n\in \dot L\rangle $ (which is an element of $\bar M$) converges to $\dot x_{\gamma_\delta, 0}$.
So we can choose large enough $n\in \omega$ and $q'_{\gamma_\delta} < p_{\gamma_\delta}$ such that $\mathcal{W}_\delta \in M_n$ and $q'_{\gamma_\delta} \Vdash \dot b \in \dot x(\dot u, M_n)$. 
Again, all required parameters are in $\bar M$ and since $|M_n|$ has uncountable cofinality, repeating the arguments as in Case One we can obtain, within $\bar M$, elements $\dot a_{\alpha_{\gamma_\delta}}, \dot a_{\beta_{\gamma_\delta}}$ and $q_{\gamma_\delta}$ as above. Case Two is finished.

\bigskip
We have obtained the collections 
$\langle q_{\gamma_\delta} \in \bar M \cap \mathbb{P} : \delta < \omega_1 \rangle$, 
$\langle a_{\alpha_{\gamma_\delta}} \in \mathcal{A} (\dot u, \mathcal{W}_\delta) \cap (\bar M \cap M_{t^\frown (\alpha_{\gamma_\delta}+1)}) : \delta < \omega_1 \rangle$
and $\langle a_{\beta_{\gamma_\delta}} \in \mathcal{A} (\dot u, \mathcal{W}_\delta) \cap (\bar M \cap M_{t^\frown (\beta_{\gamma_\delta}+1)}) : \delta < \omega_1 \rangle$
such that for every $\delta < \omega_1$, $q_{\gamma_\delta} < p_{\gamma_\delta}$ and $q_{\gamma_\delta} \Vdash a_{\alpha_{\gamma_\delta}} \subseteq^* \dot b \text{ and }
a_{\beta_{\gamma_\delta}}\cap \dot b =^*\emptyset$.
Note that $\bar M \cap M_t$ has cardinality $\aleph_1$ and is closed under $\omega$-sequences (this follows by our assumption on $\bar M$ and \ref{SubTrees}.(6)). 
Fix any $\subseteq$-chain $\langle \bar M_\xi :\xi < \omega_1 \rangle$ of countable elementary submodels that unions up to the model $\bar M \cap M_t$ such that for every $\xi\in \omega_1$, $\{q_{\gamma_\xi}, \dot a_{\alpha_{\gamma_\xi}}, \dot a_{\beta_{\gamma_\xi}}, \mathcal{W}_\xi \} \subseteq \bar M_{\xi +1}$.
Now observe that $\bigcup_{\delta \in \omega_1} \mathcal{W}_\delta = \dot u \cap (\bar M \cap M_t) = \bigcup_{\delta\in \omega_1} \dot u \cap \bar M_\delta$. 
Hence there is a c.u.b. $C\subseteq \omega_1$ such that for every $\delta \in C$, $\dot u \cap \bar M_\delta = \mathcal{W}_\delta$. 
Consider a $\mathbb{P}$-name $\dot S$ for the set $\{ \delta \in C : q_{\gamma_\delta} \in G \}$ (recall that $G$ is a $\mathbb{P}$-generic filter). 
Using the fact that $\mathbb{P}$ is finally property $K$ (in fact, only by $ccc$), $\dot S$
is forced by some condition in $G$ to be uncountable.
So, in $V[G]$ we have $\omega_1$-many sequences $\val_G ( \dot a_{\alpha_{\gamma_\delta}} )$ that are almost contained in $\val_G (\dot b)$ and $\omega_1$-many sequences $\val_G (a_{\beta_{\gamma_\delta}})$ that are almost disjoint from $\val_G (\dot b)$ which contradicts Lemma \ref{KeyLemma} for the model $\bar M \cap M_t$.
\end{proof}

\medskip
Let us note that Lemma \ref{KeyLemma} and Lemma \ref{l:*} imply the following: if $\mbox{cf}(\kappa_t) > \omega$, then for any choice of an element $\dot x_\gamma$ in $\mathcal{X} (\dot u, M_{t^\frown (\gamma +1)})$, $\gamma < \mbox{cf}(\kappa_t)$, we have that the sequence $\langle \dot x_\gamma : \gamma < \mbox{cf}(\kappa_t) \rangle$ converges to a unique point (that is, $\mathcal{X} (\dot u, M_t) = \{ \dot x(\dot u, M_t) \}$).

We can think of Lemma \ref{l:*} as a generalization of Lemma \ref{KeyLemma} and we use it in the next to lift ($\star$) up to higher levels.

\begin{lemma}\label{l:**}
If \textnormal{($\star$)}$_s$ holds for every $s\in T$ with $t\subsetneq s$, then \textnormal{($\star$)}$_t$ holds.
\end{lemma}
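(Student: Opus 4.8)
The plan is to split according to the cardinal $\kappa_t=|M_t|$, using that \textnormal{($\star$)}$_t$ has content only when $\mbox{cf}(\kappa_t)>\omega$. If $\mbox{cf}(\kappa_t)=\omega$ there is nothing to prove. If $\kappa_t=\aleph_1$ the statement reduces to the base case: $\dot x(\dot u,M_t)$ is the point produced by Lemma \ref{KeyLemma} for the increasing union $M_t=\bigcup_{\alpha<\omega_1}M_{t^\frown(\alpha+1)}$ from a defining sequence $\langle \dot a_\alpha\rangle$ with $\dot a_\alpha\in M_t\cap\mathcal A(\dot u,M_{t^\frown(\alpha+1)}\cap\dot u)$, and for a given countable $\mathcal W\subseteq\dot u\cap M_t$ all but countably many of the $\dot a_\alpha$ already belong to $\mathcal A(\dot u,\mathcal W)\cap M_t$ (take $\alpha$ large enough that $\mathcal W\subseteq M_{t^\frown(\alpha+1)}\cap\dot u$), while the corresponding tail of $\langle x_{\dot a_\alpha}\rangle$ still converges to $\dot x(\dot u,M_t)$. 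So the work is all in the main case $\kappa_t>\aleph_1$ with $\mbox{cf}(\kappa_t)>\omega$.

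Here I would first invoke Lemma \ref{l:*} (together with Lemma \ref{KeyLemma}, as in the remark following Lemma \ref{l:*}) to know that any sequence $\langle \dot x_\gamma:\gamma<\mbox{cf}(\kappa_t)\rangle$ with $\dot x_\gamma\in\mathcal X(\dot u,M_{t^\frown(\gamma+1)})$ radially converges in $V[G]$ to a single point $\dot x(\dot u,M_t)$, so that this point is well defined and $\mathcal X(\dot u,M_t)=\{\dot x(\dot u,M_t)\}$; I fix such a defining sequence once and for all. Since $M_t$, the chain $\langle M_{t^\frown(\gamma+1)}\rangle$, the set $\mathcal A(\dot u,\mathcal W)\cap M_t$ and the relevant names all lie in $V$, it is enough to prove the closure assertion inside $V[G]$ for an arbitrary generic $G$, which then yields the required $1\Vdash$. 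Thus I would fix a countable $\mathcal W\subseteq\dot u\cap M_t$ and an \emph{arbitrary} clopen neighborhood $b$ of $x(\dot u,M_t)$ in $X$ (not assumed to be coded in $M_t$), and aim to produce $\dot a\in\mathcal A(\dot u,\mathcal W)\cap M_t$ with $x_{\dot a}\in b$.

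The descent is then as follows. As $\langle x_\gamma\rangle$ converges to $x(\dot u,M_t)\in b$ and $b$ is open, a final segment of the sequence lies in $b$; since $\mathcal W$ is countable and $M_t=\bigcup_\gamma M_{t^\frown(\gamma+1)}$ is an increasing union along a chain of uncountable cofinality, I can fix a single $\gamma$ with $x_\gamma\in b$ and $\mathcal W\subseteq M_{t^\frown(\gamma+1)}$. It remains to replace the point $x_\gamma$ at the successor level by a genuine sequence of $\mathcal A(\dot u,\mathcal W)$, and this splits according to $\mbox{cf}(\kappa_{t^\frown(\gamma+1)})$. If $\mbox{cf}(\kappa_{t^\frown(\gamma+1)})>\omega$ (the situation of clause \ref{SubTrees}.(6)), then $\mathcal X(\dot u,M_{t^\frown(\gamma+1)})$ is a singleton, so $x_\gamma=x(\dot u,M_{t^\frown(\gamma+1)})$, and \textnormal{($\star$)}$_{t^\frown(\gamma+1)}$ places this point in the closure of the limit points of $\mathcal A(\dot u,\mathcal W)\cap M_{t^\frown(\gamma+1)}$, so the clopen neighborhood $b$ already contains some $x_{\dot a}$ with $\dot a\in\mathcal A(\dot u,\mathcal W)\cap M_{t^\frown(\gamma+1)}\subseteq M_t$. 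If instead $\mbox{cf}(\kappa_{t^\frown(\gamma+1)})=\omega$ (so $\kappa_t=\lambda^+$ with $\mbox{cf}(\lambda)=\omega$ and $\kappa_{t^\frown(\gamma+1)}=\lambda$, the situation of clause \ref{SubTrees}.(5)), then by the definition of $\mathcal X$ for a model of countable cofinality $x_\gamma$ is the limit of an $\omega$-sequence $\langle x(\dot u,M_{t^\frown(\gamma+1 ^\frown n)}):n\in\dot L\rangle$; a final segment again lies in $b$, so I would pick $n\in\dot L$ with $x(\dot u,M_{t^\frown(\gamma+1 ^\frown n)})\in b$ and $\mathcal W\subseteq M_{t^\frown(\gamma+1 ^\frown n)}$, and apply \textnormal{($\star$)}$_{t^\frown(\gamma+1 ^\frown n)}$ (legitimate because this grandchild has regular uncountable cardinality by clause \ref{SubTrees}.(5)) to obtain $\dot a\in\mathcal A(\dot u,\mathcal W)\cap M_{t^\frown(\gamma+1 ^\frown n)}\subseteq M_t$ with $x_{\dot a}\in b$. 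In both cases $b$ meets the limit points of $\mathcal A(\dot u,\mathcal W)\cap M_t$; as $b$ was arbitrary, $x(\dot u,M_t)$ lies in the required closure and \textnormal{($\star$)}$_t$ follows.

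I expect the genuine difficulty to be entirely upstream, in the well-definedness of $\dot x(\dot u,M_t)$: convergence of the defining sequence is where the forcing must be controlled, and this has already been isolated in Lemma \ref{l:*}, whose proof reflects into an $\aleph_1$-sized model and reduces to Lemma \ref{KeyLemma}. Granting that, the present lemma is a clean topological chaining, the crucial observation being that each \textnormal{($\star$)} is a closure statement in the \emph{whole} space $X$ and hence licenses arbitrary clopen neighborhoods $b$; this is exactly what makes the one-level descent valid, and it is why no further preservation-by-$\mathbb{P}/\mathbb{P}_M$ argument is needed here, the chaining taking place inside a single $V[G]$. The only place demanding care is the successor-of-countable-cofinality case, which forces the extra descent to the regular grandchildren $M_{t^\frown(\gamma+1 ^\frown n)}$ of clause \ref{SubTrees}.(5).
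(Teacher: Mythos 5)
Your proof has the same skeleton as the paper's: reduce to $\mbox{cf}(\kappa_t)>\omega$, use the remark following Lemma~\ref{l:*} to see that the sequence $\langle \dot x_\gamma : \gamma < \mbox{cf}(\kappa_t)\rangle$ converges to the single point $\dot x(\dot u,M_t)$, pass to a child $M_{t^\frown (\gamma+1)}$ capturing $\mathcal W$ whose point lands in the chosen clopen set, and split on $\mbox{cf}(\kappa_{t^\frown (\gamma+1)})$, descending to a grandchild $M_{t^\frown (\gamma+1 ^\frown n)}$ in the countable-cofinality case. Your semantic reformulation --- proving the closure statement in an arbitrary $V[G]$ for an arbitrary clopen $b$, rather than fixing a name $\dot b\in\dot B$ and manipulating conditions --- is legitimate, and it cleanly absorbs the paper's two pieces of forcing bookkeeping (the $ccc$ argument producing a single $\gamma$ in $V$ with $1\Vdash \dot b\in\dot x_\gamma$, and the density-over-$q$ argument in its Case Two); your handling of the rank-one base case via Lemma~\ref{KeyLemma} also matches what the paper does inside the proof of the Main Theorem.

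There is, however, a genuine gap in your countable-cofinality case. You secure only the \emph{inclusion} $\mathcal W\subseteq M_{t^\frown (\gamma+1)}$, and then assert you can pick $n\in L$ with both $x(\dot u, M_{t^\frown (\gamma+1 ^\frown n)})\in b$ and $\mathcal W\subseteq M_{t^\frown (\gamma+1 ^\frown n)}$. The second requirement need not be attainable: $M_{t^\frown (\gamma+1)}=\bigcup_n M_{t^\frown (\gamma+1 ^\frown n)}$ is a union along a chain of length $\omega$, so an infinite countable $\mathcal W$ may be spread cofinally through the chain, with no single grandchild containing it. Nor can you retreat to applying \textnormal{($\star$)}$_{t^\frown (\gamma+1 ^\frown n)}$ to the family $\mathcal W\cap M_{t^\frown (\gamma+1 ^\frown n)}$: that yields a sequence almost contained in the members of the smaller family only, not an element of $\mathcal A(\dot u,\mathcal W)$. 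This is precisely why the paper tracks \emph{membership} rather than inclusion: since $\mbox{cf}(\kappa_t)>\omega$, the model $M_t$ is closed under $\omega$-sequences (by the clauses of Subsection~\ref{SubTrees} for non-root nodes and by the construction of $M_\emptyset$ for the root), so the countable set $\mathcal W$ is an \emph{element} of $M_t$, hence $\mathcal W\in M_{t^\frown (\gamma+1)}$ for all large $\gamma$; then $\mathcal W\in M_{t^\frown (\gamma+1 ^\frown n)}$ for all large $n$, and countability plus elementarity gives $\mathcal W\subseteq M_{t^\frown (\gamma+1 ^\frown n)}$. If you strengthen your choice of $\gamma$ from $\mathcal W\subseteq M_{t^\frown (\gamma+1)}$ to $\mathcal W\in M_{t^\frown (\gamma+1)}$, your argument goes through; your Case One is unaffected, since \textnormal{($\star$)} there needs only the inclusion.
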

\begin{proof}
The case when $\kappa_t$ has countable cofinality is straightforward by sequential compactness. Thus let us assume $\kappa_t$ has uncountable cofinality. Now fix a countable family $\mathcal{W} \subseteq \dot u \cap M_t$. 
We want to prove that $1$ forces ``every neighborhood around $\dot x(\dot u, M_t)$ contains an element of $\mathcal A(\dot u,\mathcal W)\cap M_t$''. 
So, pick any $\dot b \in \dot B$ such that $1\Vdash \dot b \in \dot x(\dot u, M_t)$. 

The increasing family $\langle M_{t^\frown (\gamma +1)} : \gamma < \mbox{cf}(\kappa_t) \rangle$ unions up to $M_t$, so by the argument preceding this lemma for any choice for $\dot x_\gamma$ in $\mathcal{X}(\dot u, M_{t^\frown (\gamma +1)})$, $\gamma < \mbox{cf}(\kappa_t)$, the sequence $\langle \dot x_\gamma : \gamma < \mbox{cf}(\kappa_t) \rangle$ converges to $\dot x(\dot u, M_t)$.
By $ccc$ and because $\mbox{cf}(\kappa_{t}) > \omega$, $1 \Vdash$ ``$\dot b \in \dot x_\gamma$ for all but fewer than $\mbox{cf} (\kappa_t)$-many $\gamma$'s''.
Take any large enough $\gamma$ so that $\mathcal{W} \in M_{t^\frown (\gamma+1)}$ and $1\Vdash \dot b \in \dot x (\dot u, M_{t^\frown (\gamma+1)})$.

\begin{case1}
$\mbox{cf}(\kappa_{t^\frown (\gamma +1)}) > \omega$.
\end{case1}
Lemma \ref{l:*} implies that $\dot x_\gamma = \dot x (\dot u, M_{t^\frown (\gamma +1)})$. 
Next, ($\star$)$_{t^\frown (\gamma +1)}$ implies that there is $\dot a_\gamma \in \mathcal A(\dot u,\mathcal W)\cap M_{t^\frown (\gamma +1)} \subseteq \mathcal A(\dot u,\mathcal W)\cap M_t$ such that $1 \Vdash \dot a_\gamma \subseteq^* \dot b$, as desired.

\begin{case2}
$\mbox{cf}(\kappa_{t^\frown (\gamma +1)}) = \omega$.
\end{case2}
Fix any condition $q\in \mathbb{P}$. 
Let $\dot L$ be a $\mathbb{P}$-name for a subset of $\omega$ such that $1 \Vdash$ ``$\langle \dot x(\dot u, M_{t^\frown (\gamma +1 ^\frown n)}) : n\in \dot L \rangle$ converges to $\dot x_\gamma$''. 
Choose a large enough $n\in \omega$ and a condition $p < q$ such that $\mathcal{W} \in M_{t^\frown (\gamma+1 ^\frown n)}$ (hence $\mathcal{W} \subseteq M_{t^\frown (\gamma +1^\frown n)}$) and $p\Vdash \dot b \in \dot x(\dot u, M_{t^\frown (\gamma +1^\frown n)})$. 
Because $\kappa_{t^\frown (\gamma+1 ^\frown n)}$ has uncountable cofinality (\ref{SubTrees}.(6)) we can apply ($\star$)$_{t^\frown (\gamma+1 ^\frown n)}$ and repeat the arguments of Case One to get $\dot a_\gamma \in \mathcal A(\dot u,\mathcal W)\cap M_{t^\frown (\gamma +1^\frown n)}$ such that $p \Vdash \dot a_\gamma \subseteq^* \dot b$. Since this applies for every $q$, we have proved that there is $\dot a_\gamma \in \mathcal A(\dot u,\mathcal W)\cap M_{t^\frown (\gamma+1 ^\frown n)} \subseteq A(\dot u,\mathcal W)\cap M_{t}$ such that $1 \Vdash \dot a_\gamma \subseteq^* \dot b$. This concludes Case Two as well as the proof of the lemma.
\end{proof}

Now we prove our main result.

\begin{proof}[Proof of Main Theorem \ref{MainTheorem}]\
Fix a large enough cardinal $\kappa$ such that $\max\{|\mathbb{P}|, |\dot u|\} \leq \kappa$ and $\kappa^\omega = \kappa$. All elementary submodels are substructures of $H(\theta)$ where $\theta = 2^\kappa$.
From $\square$ and GCH we can get a tree $T_\kappa$ as in Subsection \ref{SubTrees} so that for every maximal node $t\in T_\kappa$, $\{ \dot u, \dot B, \mathbb{P}\} \subseteq M_t$. 
\medskip

Now we begin with the induction over $rk_{T_\kappa}$.
For the base case $rk_{T_\kappa}(t)=1$, we have that $M_t$ has cardinality $\omega_1$ and by Lemma \ref{KeyLemma} we have our definition of $\dot x(\dot u, M_t)$ which is in the radial closure of $\omega$ and satisfies $(\star)_t$.
Now, Lemma \ref{l:**} implies that the induction holds all the way up to $t= \emptyset$. 
We claim that $\dot x (\dot u, M_\emptyset )$ is the $\dot u$-limit and is in the radial closure of $\omega$. 
In fact, this follows from ($\star$)$_\emptyset$ and the fact that $\dot u \subseteq  M_\emptyset$. 
That is, if $\dot b \in \dot u \cap \dot B_X$ is any neighborhood of  $\dot x (\dot u, M_\emptyset )$ (a member of $\dot x (\dot u, M_\emptyset )$) then 1 forces that $\dot b$ almost contains an element of $\mathcal A(\dot u, \{ \dot b \}) \cap  M_\emptyset$. 
To see that $\dot x(\dot u, M_\emptyset) \in \omega^{(r)}$ note that it is the limit point of the sequence $\langle \dot x_{\alpha} : \alpha < \mbox{cf} (| M_\emptyset|)\rangle$ for any choice of $\dot x_{\alpha}$ in $\mathcal{X}(\dot u, M_{\alpha+1})$ (see Remark~\ref{Notation:X}).
\end{proof}


\section{The Pseudoradial Number $\mathfrak{pse}$ and Products}\label{SectionProducts}
In this section we analyze how weak pseudoradiality {\it interacts} with the cardinal $\mathfrak{pse}$. We prove Theorems \ref{t:pseudoradialiff} and \ref{t:product} towards the end of this section. 
We may assume spaces are 0-dimensional because of Theorem \ref{t:Sapirovskii}, so we work on $2^\kappa$ instead of $[0,1]^\kappa$.
%

\medskip

In the following we slightly modify an important result by Bella, Dow and Tironi.
We include the proof for the sake of completeness.

\begin{lemma}\label{LemmaPSE}
Suppose that a compact space $X$ cannot be mapped onto $2^\mathfrak{pse}$ and that $\mathfrak{pse}$ is regular. Then there is $\lambda < \mathfrak{pse}$ and a sequence $\{ H_n : n\in \omega \}$ of non-empty closed $G_\lambda$-sets in $X$ that forms a $\pi$-net for some point $x\in X$.
\end{lemma}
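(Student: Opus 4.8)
My plan is to prove the contrapositive and to read the conclusion through \v{S}apirovski\v{\i}'s theorem. Working $0$-dimensionally (as permitted by Theorem~\ref{t:Sapirovskii}), I replace closures by clopen sets throughout. Suppose the conclusion fails, i.e. for every $\lambda<\mathfrak{pse}$, every $x\in X$ and every countable family $\{H_n:n\in\omega\}$ of non-empty closed $G_\lambda$-sets there is a neighborhood of $x$ containing none of the $H_n$ (call this the \emph{escaping property}). I aim to build a closed set $F\subseteq X$ with $\pi\chi(x,F)\ge\mathfrak{pse}$ for every $x\in F$; by Theorem~\ref{t:Sapirovskii} this produces a continuous surjection of $X$ onto $2^{\mathfrak{pse}}$, contradicting the hypothesis. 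Concretely, $F$ will be obtained from a \emph{dyadic system} $\{\langle V_\alpha^0,V_\alpha^1\rangle:\alpha<\mathfrak{pse}\}$ of pairs of open sets with $\overline{V_\alpha^0}\cap\overline{V_\alpha^1}=\emptyset$ such that every finite box $\bigcap_{\beta\in a}\overline{V_\beta^{\varepsilon(\beta)}}$ is non-empty; then $F=\bigcap_{\alpha<\mathfrak{pse}}\bigl(\overline{V_\alpha^0}\cup\overline{V_\alpha^1}\bigr)$ maps onto $2^{\mathfrak{pse}}$ by the standard compactness argument.

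The heart is the recursion that yields this system. At stage $\alpha<\mathfrak{pse}$ I must choose $\langle V_\alpha^0,V_\alpha^1\rangle$ with disjoint closures that split every non-empty finite box determined by $\{\langle V_\beta^0,V_\beta^1\rangle:\beta<\alpha\}$. Since only finite subsets of the previous stages matter, the obstructions to adding a new split are a family of closed sets that I arrange to be closed $G_\lambda$-sets for a single $\lambda<\mathfrak{pse}$; the escaping property then hands me a neighborhood of a suitable point that avoids all of them, and $0$-dimensionality lets me carve $V_\alpha^0,V_\alpha^1$ out of that neighborhood as disjoint clopen sets cutting every relevant box. Regularity of $\mathfrak{pse}$ is used twice: to keep $\lambda=\sup_n\lambda_n<\mathfrak{pse}$ when the finitely many constraints accumulate over countably many approximations, and to push the construction through limit stages, where the partial boxes remain non-empty by compactness.

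The step I expect to be the main obstacle is exactly making the escaping property applicable, i.e. guaranteeing that at each of the $\mathfrak{pse}$ stages the constraints the new pair must respect are captured by \emph{countably many} closed $G_\lambda$-sets rather than by $|\alpha|$-many. I would secure this with the bookkeeping already available in this paper: run the recursion along a continuous increasing chain of elementary submodels of size $<\mathfrak{pse}$ that are closed under $\omega$-sequences (in the spirit of Subsections~\ref{SubS-ESM} and~\ref{SubTrees}), so that the partial dyadic system, reflected into the current submodel, presents only a countable system of $G_\lambda$-constraints at the reflected point. Once the escaping property applies at every stage, the remaining verifications---disjointness of closures, non-emptiness of all finite boxes, and that the induced map onto $2^{\mathfrak{pse}}$ is continuous and onto---are the routine ingredients of \v{S}apirovski\v{\i}'s theorem, and I would only check them briefly.
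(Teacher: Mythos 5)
Your global strategy---prove the contrapositive and turn the failure of the conclusion into a $\mathfrak{pse}$-sized independent (dyadic) family of clopen sets, contradicting the hypothesis via Theorem~\ref{t:Sapirovskii}---is the same as the paper's, but the step you yourself flag as ``the main obstacle'' is exactly where the whole proof lives, and your proposed fix does not close it. At stage $\alpha$ the new pair must split \emph{all} of the $|\alpha|$-many non-empty finite boxes, and reflecting the partial system into an elementary submodel $M$ of size $<\mathfrak{pse}$ does not shrink this to ``a countable system of $G_\lambda$-constraints'': the boxes the new pair must respect still number $|\alpha|$, and a pair chosen to behave well at a reflected point gives no reason to split boxes not captured by that point. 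The paper's solution is structural rather than reflection-based: it carries along a single \emph{countable} test family $\{H(n,\mu):n\in\omega\}$ of non-empty closed $G_{|\mu|}$-sets, refined at every stage ($H(n,\mu+1)$ is $H(n,\mu)\cap B_\mu$ or $H(n,\mu)\setminus B_\mu$), and records the clopen family only through its trace sets $Y_\beta=\{n:H(n,\mu)\subseteq B_\beta\}\subseteq\omega$; non-emptiness of all finite boxes is then witnessed by independence of $\{Y_\beta\}$ as a family of subsets of $\omega$, an invariant a countable object can carry. Keeping that family extendible at each stage uses $\mu<\mathfrak{pse}\leq\mathfrak{s}\leq\mathfrak{i}$ (non-maximality of small independent families on $\omega$), and the $\sigma_\mu$ bookkeeping plus the pressing-down lemma---which is where the regularity of $\mathfrak{pse}$ is actually spent---extracts the final $\mathfrak{pse}$-sized independent family. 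None of these ingredients appears in your sketch, and they are not routine.

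Second, even granted countably many constraints, the escaping property cannot ``hand you'' a splitter in the way you describe. The negation of the lemma says that some neighborhood of $x$ \emph{contains} no member of the countable family; it does not say that the members meet that neighborhood, so carving disjoint clopen pieces out of it need not cut anything (the members may all be disjoint from it). To convert ``no countable family of closed $G_\lambda$-sets is a $\pi$-net at any point'' into ``some clopen set splits many members of the family,'' the paper must first manufacture a point where the family is forced to accumulate: it forms the Stone space of the algebra generated by the $Y_\alpha$'s, chooses $K_\mu\subseteq\omega^*$ mapping irreducibly onto it, takes the set $H_\mu$ of $K_\mu$-limits of the $H(n,\mu)$'s, covers the compact set $H_\mu$ by finitely many clopen sets each containing no $H(n,\mu)$, and argues that one of them must \emph{hit} some $H(n,\mu)$ with $n$ in a filter set while containing none---hence it splits positively many of them. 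This hit-but-not-contain mechanism at a cluster point is the heart of the lemma; without it (or a substitute for it), your recursion cannot get started at even a single stage.
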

\begin{proof}

Suppose that the statement fails. 
We follow the induction as in \cite{bella2001pseudoradial}. 
Start with a countable family $\{ H(n, 0): n\in \omega \}$ of pairwise disjoint closed subsets of $X$.
Inductively we will choose an independent family $\{ B_\mu : \mu < \mathfrak{pse} \}$ of clopen sets of $X$ (i.e. a family of clopen sets $\mathcal{B}$ such that for any finite subcollection $A_0, \ldots, A_n, B_0, \ldots, B_m \in \mathcal{B}$, the set $(\bigcap_{i\leq n} A_i) \cap (\bigcap_{i\leq m} X\setminus B_i)$ is clopen) and closed sets $\{ H(n, \mu) : \mu < \mathfrak{pse} \}$, $n \in \omega$, such that for each $\mu < \mathfrak{pse}$, $H(n, \mu +1)$ is set equal to either $H(n,\mu)\cap B_\mu$ or $H(n,\mu) \setminus B_\mu$. If $\mu$ is limit, set $H(n,\mu) = \bigcap_{\beta < \mu} H(n, \beta)$. 
Also choose $\sigma_\mu \in Fn(\mu, 2)$, a finite partial function from $\mu$ to $2$, such that the following formula ($\odot_\mu$) holds

\smallskip
($\odot_\mu$) \ \ for all $\tau \in Fn(\mu, 2)$ such that $\sigma_\beta = \sigma_\mu$ for each $\beta \in dom(\tau)$

\hspace{1cm} $(|\{ n\in \omega : H(n, \mu \!+1) \subseteq B_{\sigma_\mu} \cap (B_\tau \cap B_\mu) \}| = \aleph_0$ and 

\hspace{1.2cm} $|\{ n\in \omega : H(n, \mu \!+1) \subseteq B_{\sigma_\mu} \cap (B_\tau \setminus B_\mu) \}| = \aleph_0$).
\smallskip

Here, if $\tau \in Fn(\mu,2)$ set $B_\tau = \bigcap_{\alpha \in dom(\tau)} B_\alpha^{\tau(\alpha)}$, where $B_\alpha^{\tau(\alpha)} = B_\alpha$ if $\tau(\alpha) = 1$ and $B_\alpha^{\tau(\alpha)} = X \setminus B_\alpha$ if $\tau(\alpha) = 0$.
\medskip

Suppose we have constructed the sets $\{ H(n,\mu) : n\in \omega \}$ and $\{ B_\alpha : \alpha < \mu \}$. 
We have to find $B_\mu$ and $H(n,\mu+1)$ for each $n\in \omega$. 
By the assumption above for each $\alpha < \mu$, $H(n,\mu)$ is either contained in, or disjoint from $B_\alpha$. 
For $\alpha < \mu$ let $Y_\alpha = \{ n\in \omega : H(n,\mu) \subseteq B_\alpha \}$. 
By ($\odot_\mu$) each $Y_\alpha$ is infinite. 
Let $\mathcal{Y}_\mu$ be the Boolean subalgebra of $\mathcal{P}(\omega)$ generated by $\{ Y_\alpha : \alpha < \mu \}$. 
The Stone space of $\mathcal{Y}_\mu / \text{fin}$, $S(\mathcal{Y}_\mu / \text{fin})$, is a compactification of $\omega$, hence it is the image of the remainder $\omega^* = \beta \omega \setminus \omega$ under the natural map, namely $f$. 
Apply Zorn's Lemma to $\mathcal{C} = \{ K : K \subseteq \omega^* \text{ is closed and } f\restriction K \text{ is onto} \}$ to find a closed $K_\mu$ that is $\supseteq$-minimal. That is, $f_\mu = f\restriction K_\mu$ is an irreducible map from $K_\mu$ onto $S(\mathcal{Y}_\mu / \text{fin})$. 
\smallskip

In the following we find $B_\mu$. Let $F_\mu$  be the filter of those $A\subseteq \omega$ such that $K_\mu$ is contained in $A^*$. Define  $H_\mu$ to be the intersection of the family $\{   cl( \bigcup \{ H(n,\mu)  : n \in  A \}  )   :    A\in F_\mu  \}$ (in essence, $H_\mu$ is the non-empty set of the $K_\mu$-limits of the $H(n,\mu)$'s). 

We claim that there is a clopen $B$ in $X$ such that $K_\mu \cap (Z_B)^* \neq \emptyset$, where $Z_B = \{ n :   B \text{ splits }   H(n, \mu)  \}$ ($S$ splits $A$ means that both $A\cap S$ and $A \setminus S$ are non-empty). 
Once proved our claim we will let $B_\mu = B$. 
Assume towards a contradiction that for each clopen $B$, $(Z_B)^*$  misses $K_\mu$ which is the same as saying that $Z_B$ is in the dual ideal of $F_\mu$. 
By the assumption that the collection $\{ H(n,\mu) : n\in \omega \}$ is not a $\pi$-net for any point $x\in X$, for each $x$ in $H_\mu$ choose a clopen neighborhood $B_x$ of $x$ that contains no $H(n,\mu)$. 
By compactness, let $B_{x_1},  B_{x_2} , \ldots  ,  B_{x_m}$  be a finite cover of $H_\mu$ consisting of such $B_x$'s. 
Then there is an $A$ in $F_\mu$ such that none of $B_{x_1} , \ldots,  B_{x_m}$ splits $H(n, \mu)$  for any $n\in A$ (otherwise if there is $i \leq m$ such that $B_{x_i}$ splits $H(n,\mu)$ for all $n\in A$, then $A=Z_{B_{x_i}} \in F_\mu$, which is not possible).
However, one of the $B_{x_i}$'s   must hit  at least one of the $H(n,\mu)$'s  for $n\in A$ but this means that  one of those $H(n,\mu)$ is contained in one of those $B_{x_i}$. This is the desired contradiction.
\smallskip

Now that we have found $B_\mu$, we find $\sigma_\mu$. Observe that $K_\mu \cap \overline{Z_\mu}$ is clopen relative to $K_\mu$ hence $f_\mu [ K_\mu \cap Z_\mu]$ has interior in $S(\mathcal{Y}_\mu / \text{fin})$. This implies that there is $\sigma_\mu \in Fn(\mu,2)$ such that the closure of the set $Y_{\sigma_\mu} := \bigcap \{ Y_\alpha : \sigma_\mu (\alpha) = 1  \} \cap \bigcap \{ \omega \setminus Y_\alpha : \sigma_\mu (\alpha ) = 0 \}$ in $S(\mathcal{Y}_\mu / \text{fin})$ is contained in $f_\mu [ K_\mu \cap Z_\mu]$. This implies that $K_\mu$ is disjoint from the closure of $Y_{\sigma_\mu} \setminus Z_\mu$ in $\omega^*$,
and as a consequence of this fact, for all $Y\in \mathcal{Y_\mu}$, if $Y\cap Y_{\sigma_\mu}$ is infinite, $Y\cap (Y_{\sigma_\mu} \cap Z_\mu)$ is also infinite. 
\smallskip

Let's now find $H(n, \mu +1)$, for each $n\in \omega$. Set $J_\mu = \{ \beta : \sigma_\beta = \sigma_\mu \}$. By inductive assumption, $\{ Y_\beta \cap Y_{\sigma_\mu} : \beta \in J_\mu \}$ is an independent family on $Y_{\sigma_{\mu}}$. To see this, take any $\tau \in Fn(J_\mu, 2)$ and let $\mu' = \mbox{max} \mathop{dom} (\tau)$, $\mu' < \mu$. Then the formula $|Y_\tau \cap Y_{\sigma_\mu}| = \aleph_0$ follows from the relevant clause $(\odot_{\mu'})$ (depending upon the value of $\tau(\mu')$). 
In addition, $\{ Y_\beta \cap (Y_{\sigma_\mu} \cap Z_\mu) : \beta \in J_\mu \}$ is a non-maximal independent family (because $\mu < \mathfrak{pse \leq s \leq i}$) on $Y_{\sigma_\mu} \cap Z_\mu$, so we can choose $Y \subseteq Y_{\sigma_\mu} \cap Z_\mu$ such that $\{ Y_\beta : \beta \in J_\mu \} \cup \{ Y\}$ is independent on $Y_{\sigma_\mu} \cap Z_\mu$.
Set 
$H(n,\mu+1)$ to be $H(n,\mu)\cap B_\mu$ if $n\in Y$, $H(n,\mu)\setminus B_\mu$ if $n\in Z_\mu \setminus Y$, or $H(n,\mu)$ if $n\notin Z_\mu$. 
Finally redefine $B_\mu$ to be $B_\mu \cap B_{\sigma_\mu}$. This completes the induction.
\smallskip

To finish, observe that, by the pressing down lemma, there would be $\mathfrak{pse}$-many $\mu$ with the same value for $\sigma_\mu$ and this would result on an $\mathfrak{pse}$-sized independent family of clopen subsets of $X$. Then $X$ would map onto $2^\mathfrak{pse}$, contradiction.
\end{proof}

\begin{definition}
We say that a subset $A$ is {\it $G_\lambda$-dense in its closure} if for every $G_\lambda$-set $H\subseteq \overline{A}$, $A\cap H \neq \emptyset$.
\end{definition}

Note that every $G_\lambda$-set contains a closed $G_\lambda$-set. Also it can be easily checked that if $A$ is a radially closed  subset of a sequentially compact space, then $A$ is $G_\delta$-dense in its closure. 

For a cardinal $\kappa$, $\kappa^- = \kappa$ if $\kappa$ is limit, otherwise $\kappa^-$ is the predecessor of $\kappa$.

\begin{lemma}\label{LemmaLambda}
Let $X$ be a compact weakly pseudoradial space which cannot be mapped onto $2^{\mathfrak{pse}}$. 
Suppose that $A\subseteq X$ is radially closed with $\lambda = \lambda(A,X) \geq \mathfrak{pse}^-$ and assume $\mathfrak{pse}$ is regular.
Then,  $A$ is $G_{\gamma}$-dense in $\overline{A}$ for each $\gamma \leq \lambda$.
\end{lemma}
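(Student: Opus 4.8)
The plan is to establish the claim by downward induction on the dimension $\gamma$. The statement I am trying to prove is that $A$ is $G_\gamma$-dense in its closure for every $\gamma \le \lambda$, where $\lambda = \lambda(A,X) \ge \mathfrak{pse}^-$ and $X$ is compact, weakly pseudoradial, not mappable onto $2^{\mathfrak{pse}}$, and $\mathfrak{pse}$ is regular. The key tool I expect to use is Lemma~\ref{LemmaPSE}: since $X$ cannot be mapped onto $2^{\mathfrak{pse}}$ and $\mathfrak{pse}$ is regular, there is some $\lambda_0 < \mathfrak{pse}$ together with a countable family of closed $G_{\lambda_0}$-sets forming a $\pi$-net for some point. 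More usefully, by restricting to closed subspaces (which inherit the hypotheses, since a closed subspace of $X$ still cannot be mapped onto $2^{\mathfrak{pse}}$), I can apply this lemma locally inside any closed $G_\gamma$-set sitting in $\overline{A}$.

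**First I would** fix a closed $G_\gamma$-set $H \subseteq \overline{A}$ for some $\gamma \le \lambda$ and argue it must meet $A$. Suppose toward a contradiction that $H \cap A = \emptyset$. Because $A$ is radially closed in a sequentially compact space, $A$ is already $G_\delta$-dense in its closure (as noted just before the lemma), which handles the case $\gamma = \omega$ and gives the base. For the inductive step, I would assume $A$ is $G_\beta$-dense in $\overline{A}$ for all $\beta < \gamma$ and derive $G_\gamma$-density. Apply Lemma~\ref{LemmaPSE} to the compact space $H$ (or to $\overline{A}$ together with the witnessing data): since $\gamma \le \lambda$ and $\mathfrak{pse}^- \le \lambda$, one gets a cardinal $\lambda_1 < \mathfrak{pse} \le \gamma^+$ and a sequence $\{H_n : n \in \omega\}$ of nonempty closed $G_{\lambda_1}$-sets forming a $\pi$-net for a point $x \in H$. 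The point of this is that $\lambda_1 < \gamma$ in the relevant cofinality regime, so by the inductive hypothesis each $H_n$ meets $A$; picking $a_n \in H_n \cap A$ produces a countable sequence in $A$ whose limit points are forced (by the $\pi$-net property) into $H \subseteq \overline{A} \setminus A$.

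**The crux** is then to convert this countable sequence $\langle a_n \rangle \subseteq A$ into a contradiction with weak pseudoradiality. Since $X$ is weakly pseudoradial, the radial closure of the countable set $\{a_n : n \in \omega\}$ is closed, hence coincides with its topological closure, which contains the $\pi$-net point $x \in H$. But $x \notin A$ while $A$ is radially closed, so $x$ cannot lie in the radial closure of a subset of $A$ — contradiction. I must be careful that the $\pi$-net genuinely forces a limit point of the $a_n$'s into $H$ and not merely into $\overline{A}$; this is where the $\pi$-net structure (every neighborhood of $x$ contains some $H_n$, hence contains infinitely many $a_n$) does the work, guaranteeing $x$ is in the closure of $\{a_n\}$.

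**The main obstacle** I anticipate is matching cardinal arithmetic: I need $\lambda_1 < \gamma$ (so that the inductive hypothesis applies to each $G_{\lambda_1}$-set $H_n$) while Lemma~\ref{LemmaPSE} only guarantees $\lambda_1 < \mathfrak{pse}$. When $\gamma \ge \mathfrak{pse}$ this is immediate, but when $\gamma$ lies strictly between $\mathfrak{pse}^-$ and $\mathfrak{pse}$ (possible only if $\mathfrak{pse}$ is a successor, $\mathfrak{pse} = \gamma^+$) I must verify that $\lambda_1 < \mathfrak{pse} = \gamma^+$ still yields $\lambda_1 \le \gamma$, and then handle the boundary $\lambda_1 = \gamma$ separately — here regularity of $\mathfrak{pse}$ and the careful bookkeeping of $\lambda(A,X)$ (using that $\lambda(A,X)$ has uncountable cofinality for radially closed $A$, as recorded in Subsection~\ref{SubTopology}) should let me either reduce to a strictly smaller $G$-rank or absorb the case directly. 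Isolating and cleanly disposing of this boundary interaction between $\gamma$, $\lambda_1$, and $\mathfrak{pse}$ is, I expect, the delicate part of the argument; the rest is a fairly direct reflection-free application of the $\pi$-net lemma against weak pseudoradiality.
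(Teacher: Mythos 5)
Your overall plan---apply Lemma~\ref{LemmaPSE} inside a closed $G_\gamma$-set $H\subseteq\overline{A}$ to get a $\pi$-net $\{H_n : n\in\omega\}$ at some $x\in H$, produce points $a_n\in H_n\cap A$, and then use weak pseudoradiality plus radial closedness of $A$ to force $x\in A$---matches the paper's skeleton, but the step producing the points $a_n$ is exactly where the argument breaks, and the induction on $\gamma$ cannot repair it. The sets $H_n$ are closed $G_{\lambda_1}$-sets \emph{relative to $H$}; as subsets of $\overline{A}$ they are intersections of the $\gamma$-many open sets cutting out $H$ together with $\lambda_1$-many more, i.e.\ closed $G_{\max(\gamma,\lambda_1)}$-sets. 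Their rank is therefore at least $\gamma$, never $<\gamma$, so the inductive hypothesis ($G_\beta$-density for $\beta<\gamma$) is simply not applicable to them; no bookkeeping about whether $\lambda_1<\gamma$ or $\lambda_1=\gamma$ can fix this, because the rank deficit comes from $H$ itself, not from $\lambda_1$. Worse, the statement you want from the induction is outright false in your setup: you assume $H\cap A=\emptyset$ toward a contradiction, and since $H_n\subseteq H$, every $H_n$ misses $A$. Any strategy that looks for points of $A$ \emph{inside} the $\pi$-net sets is doomed from the start.

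The paper avoids this with a different device, which is the real content of the lemma. First, no induction on $\gamma$ is needed: a closed $G_\gamma$-set with $\gamma\le\lambda$ is a closed $G_\lambda$-set, so it suffices to treat $\gamma=\lambda=\lambda(A,X)$. Then, writing $H=\bigcap_{\alpha<\lambda}W_\alpha$ and $H_n=H\cap\bigcap_{\alpha<\lambda}V_\alpha(n)$, where each $W_\alpha$ and $V_\alpha(n)$ is an intersection of at most $|\alpha|\cdot\aleph_0$ open sets (here is where $\lambda\ge\mathfrak{pse}^-$ enters, to absorb the relative rank of the $H_n$ coming from Lemma~\ref{LemmaPSE}), one considers the \emph{partial} intersections $W_\alpha^n=W_\alpha\cap V_\alpha(n)\supseteq H_n$. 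These are nonempty closed $G$-sets of rank $|\alpha|\cdot\aleph_0<\lambda$ and they are not subsets of $H$; so the \emph{minimality} of $\lambda(A,X)$---a hypothesis your proposal never invokes---supplies points $x(\alpha,n)\in W_\alpha^n\cap A$. Fixing an ultrafilter $u$ on $\omega$ with $x$ the $u$-limit of $\{H_n\}$, weak pseudoradiality and radial closedness keep each $u$-limit $x_\alpha^u$ of $\{x(\alpha,n):n\in\omega\}$ inside $A$, and then the $\lambda$-sequence $\langle x_\alpha^u:\alpha<\lambda\rangle$ converges to $x$, whence $x\in A$ by radial closedness. In short: the points of $A$ come from small-rank closed sets \emph{containing} the $H_n$'s, obtained via minimality of $\lambda(A,X)$, not from the $H_n$'s themselves, and a two-stage limit replaces your one-step countable-sequence argument.
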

\begin{proof}
Since a $G_\gamma$ set is also $G_\eta$ when $\gamma \leq \eta$, it suffices to prove the result for $\gamma = \lambda = \lambda (A, X)$.
Let $H$ be a closed $G_{\lambda}$-set in $\overline{A}$. We can get a sequence $\{ W_\alpha : \alpha < \lambda \}$ of closed sets such that $W_\alpha$ is the intersection of at most $|\alpha| \cdot \aleph_0$-many open sets and the sequence intersects down to $H$. Let us note that $H$ has no isolated points, otherwise there would be a sequence of elements in $A$ converging to such points, contradicting radial closedness. In particular,  $H$ is infinite. 

The set $H$ inherits from $X$ compactness and cannot be mapped onto $2^{\mathfrak{pse}}$. By Lemma~\ref{LemmaPSE} applied to $H$, there is a collection $\{ H_n : n\in \omega \}$ of closed $G_{\gamma}$-sets in $H$, for some $\gamma < \mathfrak{pse}$, that forms a $\pi$-net around a point $x\in H$. Since $\gamma \leq \mathfrak{pse}^- \leq \lambda$, each set $H_n$ is a closed $G_\lambda$-set in $H$. For each $n\in \omega$, we can choose a collection of closed sets $\{ V_\alpha (n): \alpha < \lambda \}$ in $\overline{A}$ whose intersection with $H$ is $H_n$ and $V_\alpha(n)$ is the intersection of at most $|\alpha|\cdot \aleph_0$ open sets. 
For $n\in \omega$ and $\alpha < \lambda$, define $W_\alpha^n = W_\alpha \cap V_\alpha (n)$.
By the minimality of $\lambda$, $W_\alpha^n \cap A \neq \emptyset$,  so we choose a point $x(\alpha,n)$ in $W_\alpha^n \cap A$, for each $\alpha <\lambda$, $n\in \omega$.

Pick an ultrafilter $u$ on $\omega$ such that $x$ is the $u$-limit of the sequence $\{ H_n : n\in \omega\}$.
Let  $x^u_\alpha$  denote the $u$-limit of the set $\{ x(\alpha,n) : n\in\omega \}$. 
As $X$ is weakly pseudoradial, the radial closure of $\{ x(\alpha,n) : n\in\omega \}$ is closed and we are assuming that $A$ is radially closed, so $x_\alpha^u$ is in $A$. 

It is easy to see now that $\{ x^u_\alpha : \alpha< \lambda \}$ converges to $x$, therefore $x\in A$. Thus, $H \cap A \neq \emptyset$ as claimed.
\end{proof}

For Theorem \ref{t:product} we need the following lemma. The authors apologize if the corresponding reference is missing; a proof is given.

\begin{lemma}\label{LemmaProductCannotBeMapped}
If $X$ and $Y$ are compact spaces that do not map onto $[0,1]^\kappa$,
then neither does the product $X\times Y$.
\end{lemma}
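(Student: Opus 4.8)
The plan is to prove the contrapositive via Šapirovskiĭ's characterization (Theorem~\ref{t:Sapirovskii}), which replaces the awkward condition ``maps onto $[0,1]^\kappa$'' with the purely internal condition ``there is a closed subset on which every point has $\pi$-character $\geq \kappa$.'' Since all three spaces here are compact, I am free to use clause (iii) interchangeably with clause (i). So suppose, toward a contradiction, that $X \times Y$ \emph{does} map onto $[0,1]^\kappa$. By Theorem~\ref{t:Sapirovskii} applied to the compact space $X \times Y$, there is a closed set $F \subseteq X \times Y$ with $\pi\chi(z, F) \geq \kappa$ for every $z \in F$. The goal is to push this ``large local $\pi$-character'' property down to one of the two factors, contradicting the assumption that neither $X$ nor $Y$ maps onto $[0,1]^\kappa$ (equivalently, that neither has a closed subset with $\pi$-character everywhere $\geq \kappa$).

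**The key steps**, in order, would be as follows. First, let $\pi_X \colon X \times Y \to X$ and $\pi_Y \colon X\times Y \to Y$ be the projections, and set $F_X = \pi_X[F]$, $F_Y = \pi_Y[F]$; these are closed since the factors are compact and projections of compact sets are compact, hence closed. Second, I would argue that for each $z = \langle x, y\rangle \in F$, the $\pi$-character $\kappa$ ``splits'' across the two coordinates: a $\pi$-base of $z$ in $F$ can be refined to consist of boxes $(U \times V)\cap F$, and from such a box-$\pi$-base one extracts a $\pi$-base for $x$ in $F_X$ or for $y$ in $F_Y$. The cardinal arithmetic here is the crux: if both $\pi\chi(x, F_X) < \kappa$ and $\pi\chi(y, F_Y) < \kappa$, I want to combine the two small $\pi$-bases into a $\pi$-base for $z$ in $F$ of size $< \kappa$ (when $\kappa$ is regular, the product of two sets each of size $< \kappa$ has size $< \kappa$), contradicting $\pi\chi(z,F) \geq \kappa$. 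Third, from ``for every $z \in F$, either $\pi\chi(\pi_X(z), F_X) \geq \kappa$ or $\pi\chi(\pi_Y(z), F_Y) \geq \kappa$'' I would derive, by a covering/pigeonhole argument on $F$, a closed subset of $F_X$ or of $F_Y$ on which the $\pi$-character is everywhere $\geq \kappa$, and then invoke Theorem~\ref{t:Sapirovskii} in the reverse direction to conclude that $X$ or $Y$ maps onto $[0,1]^\kappa$.

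**The main obstacle** I anticipate is the third step, and the handling of singular $\kappa$ in the second. The delicate point is that the set $E_X = \{z \in F : \pi\chi(\pi_X(z), F_X) \geq \kappa\}$ need not be closed, so its projection need not immediately be a closed subset of $F_X$ witnessing clause (iii); I expect to need to pass to the \emph{closure} $\overline{\pi_X[E_X]}$ and argue that $\pi$-character is not decreased when passing to this closed subset (using that $\pi$-character at a point of a closed subspace $F' \subseteq F_X$ is at least $\pi\chi$ computed in $F_X$ is false in general — rather one works with $\pi$-character relative to the right subspace), or instead to run the contradiction through a maximal-box-$\pi$-base argument that keeps the reflection inside $F$ itself. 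For singular $\kappa$, the naive product bound $|{<}\kappa| \cdot |{<}\kappa| = {<}\kappa$ can fail, so there I would instead factor the problem through the regular cardinals below $\kappa$: showing $X\times Y \to [0,1]^\kappa$ forces, via Theorem~\ref{t:Sapirovskii}(ii), a map onto $2^\kappa$, and then work cofinally with the regular case, or observe that mapping onto $[0,1]^\kappa$ is equivalent to mapping onto $[0,1]^{\kappa'}$ for all $\kappa' \leq \kappa$ and reduce to $\kappa$ regular at the outset.
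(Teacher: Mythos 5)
Your argument breaks down at the second step, and the failure is not a technicality about cardinal arithmetic. The claimed combination---that $\pi$-bases of size $<\kappa$ for $x$ in $F_X=\pi_X[F]$ and for $y$ in $F_Y=\pi_Y[F]$ can be merged into a $\pi$-base of size $<\kappa$ for $z=\langle x,y\rangle$ in $F$---is false, because $F$ is not a product: the candidate sets $F\cap(U\times V)$ can be empty even when $U$ meets $F_X$ inside a prescribed neighborhood of $x$ and $V$ meets $F_Y$ inside one of $y$, since the points of $F$ lying over $U$ may project into $Y$ far away from $V$. A concrete counterexample, with $\kappa=\omega_1$ regular so the arithmetic is not the issue: let $X=Y=\beta\omega$ and $F=\Delta_{\beta\omega}\cup(\omega^*\times\omega^*)$, a closed subset of $X\times Y$, and take $z=\langle p,q\rangle$ with $p\neq q$ in $\omega^*$. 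Both projections equal $\beta\omega$, where every point has countable $\pi$-character (the singletons $\{n\}$, $n\in\omega$, form a $\pi$-base at every point). However, if $A\in p$ and $B\in q$ are disjoint subsets of $\omega$, then $(\overline{A}\times\overline{B})\cap F=(\overline{A}\cap\omega^*)\times(\overline{B}\cap\omega^*)$, so near $z$ the space $F$ is just $\omega^*\times\omega^*$; projecting any $\pi$-base at $z$ to the first coordinate shows $\pi\chi(z,F)\geq\pi\chi(p,\omega^*)$, which is uncountable (no countable family of infinite subsets of $\omega$ is a $\pi$-base for an ultrafilter, since a single set can be chosen splitting all of them). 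Thus $\pi\chi(z,F)\geq\omega_1$ while both projected $\pi$-characters are countable; the pointwise dichotomy of your step three, which rests on step two, is unjustified. (This does not contradict the lemma itself, since $\beta\omega$ does map onto $[0,1]^{\omega_1}$; it only shows your route cannot reach it.)

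This emptiness problem is exactly what the paper's proof is engineered to overcome, and is why that proof is not short. The paper never splits $\pi$-character across coordinates. Instead it fixes $f$ mapping $X\times Y$ onto $[0,1]^\kappa$, passes to a closed $F$ on which $f$ is irreducible onto $2^\kappa$, and uses irreducibility as the substitute for your non-emptiness claim: every relatively open subset of $F$, in particular every tube $F\cap(U_\alpha\times Y)$ coming from a $\pi$-base $\{U_\alpha:\alpha<\lambda_x\}$ of size $\lambda_x<\kappa$ at a suitable point $x\in\pi_X[F]$ (which exists by Theorem~\ref{t:Sapirovskii} because $X$ does not map onto the cube), contains a set of the form $F\cap f^{-1}([\sigma_\alpha])$, and these full preimages are automatically non-empty. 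Ultrafilter limits of them give a non-empty $H_{\mathcal U}\subseteq F\cap(\{x\}\times Y)$, and only then is the hypothesis on $Y$ invoked, on the coordinates of $2^\kappa$ untouched by the $\sigma_\alpha$'s, to produce the contradiction. So the two hypotheses enter asymmetrically and sequentially, not through a pointwise either/or over $F$; some device like irreducibility (or an equivalent replacement) is indispensable, and without it the proposal has a genuine gap.
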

\begin{proof}
Towards a contradiction assume that $f$ is a continuous function from $X\times Y$ onto $[0,1]^\kappa$.
We can pass to a closed subset $F$ of $X\times Y$ so that $f[F] = \{0,1\}^\kappa$ and $f\restriction F$ is irreducible.  
Recall that every relatively open subset of $F$ contains the full preimage of some non-empty open subset of $2^\kappa$.
\smallskip

Denote by $\pi_X$ the canonical projection from $X\times Y$ to $X$.
Consider the closed subset $\pi_X[F]$ of $X$.
By Theorem \ref{t:Sapirovskii} we can choose
$x\in \pi_X[F]$ so that $\lambda_x = \pi\chi(x,\pi_X[F])<\kappa$.
Let $\{ U_\alpha : \alpha < \lambda_x\}$ be a family of open subsets of $X$ so that $\{ U_\alpha \cap \pi_X[F] : \alpha < \lambda_x\}$ is a relative local $\pi$-base at $x$.
For each $\alpha$, let $F[U_\alpha] = F\cap (U_\alpha\times Y)$. Choose a basic clopen $[\sigma_\alpha]\subseteq 2^\kappa$ so that $F_{\sigma_\alpha}=F\cap f^{-1}([\sigma_\alpha])$ is contained in $F[U_\alpha]$.
Choose any ultrafilter $\mathcal U$ on $\lambda_x$ that extends the 
neighborhood trace of $x$ on the family $\{ U_\alpha : \alpha\in \lambda_x\}$.
That is, for each open $x\in U\subseteq X$, the set $\{ \alpha <\lambda_x : U_\alpha \subseteq U\}$ is an element of $\mathcal U$. 
\smallskip

Now let $H_{\mathcal U}$ be the set of all $\mathcal U$-limits of the family $\{ F_{\sigma_\alpha} : \alpha < \lambda_x\}$. 
In other words, $z\in H_{\mathcal U}$ if and only if for each open $z\in U\times W\subseteq X\times Y$, the set $\{ \alpha :  F_{\sigma_\alpha}\cap  ( U\times W) \neq \emptyset\}$ is in the filter $\mathcal U$, or equivalently, for all $I \in \mathcal U$, $z$ is in the closure of  $\bigcup \{ F_{\sigma_\alpha}  : \alpha \in I\}$.
Note that $H_\mathcal{U} \subseteq F$ since $F$ is closed, and even more specifically, $H_{\mathcal U}$ is a subset of $F_x = F\cap (\{x\}\times Y)$.  

Let $J$ be the set of indices $\kappa\setminus \bigcup\{ \mathop{dom}(\sigma_\alpha) : \alpha <\lambda_x\}$ and let $\pi_J$ denote the projection of $2^\kappa$ onto $2^J$.  
Consider the set $(\pi_J\circ f)[  H_{\mathcal U} ]\subseteq 2^J$. 
This set is nowhere dense in $2^J$ since $\{x\}\times Y$ does not map onto $2^\kappa$. %
Then choose a non-empty clopen $[\tau]\subseteq 2^J$ such that $[\tau]\cap (\pi_J\circ  f)[H_{\mathcal U}]$ is empty.
Now consider $[\tau]$ as a subset of $2^\kappa$ (same as $\pi_J^{-1}([\tau])$).  
For each $\alpha<\lambda_x$,  $[\tau]\cap [\sigma_\alpha]$ is not empty. 
Also $f^{-1}([\tau \cup \sigma_\alpha])\cap F$ is a subset of $F_{\sigma_\alpha}$. The set of $\mathcal U$-limits, $H_{\tau,\mathcal U}$, of the family $\{ f^{-1}([\tau \cup \sigma_\alpha]) \cap F : \alpha < \lambda_x\}$ is a non-empty subset of $H_{\mathcal U}$. 
Clearly  $f[H_{\tau,\mathcal U}]\subseteq [\tau]$ and hence $(\pi_J\circ f)[H_{\tau,\mathcal U}]$ is non-empty. This contradicts that $(\pi_J\circ f)[H_{\mathcal U}]\cap [\tau]$ is empty.
\end{proof}

\begin{proof}[Proof of Theorem \ref{t:pseudoradialiff}]\
The forward implication is immediate. 
Now let us assume that $X$ cannot be mapped onto $2^\mathfrak{pse}$ and towards a contradiction suppose that $A$ is a radially closed non-closed subset of $X$. Consider a closed  $G_\lambda$ subset $H$ of $\overline{A} \setminus  A$ with $\lambda$  minimal. Let $\{ W_\alpha : \alpha < \lambda \}$  be the descending sequence of closed sets such that $W_\alpha$ is equal to the intersection of at most  $|\alpha|\cdot \aleph_0$ many open sets and $H$ equals the intersection.

If $\mathfrak{pse} = \aleph_1$, by \v Sapirovski\v \i's Theorem \ref{t:Sapirovskii} there is a point  $x$ in $H$ that has countable $\pi$-character. 
Let  $\{ H_n : n \in \omega\}$ be a local $\pi$-net for  $x$  where  each   $H_n$  is a closed $G_\delta$-set in  $H$. Choose any ultrafilter  $u$ on $\omega$ so that $x$ is the $u$-limit  of the sequence $\{ H_n  : n \in \omega \}$. 
Then we can simply choose   $G_\delta$-sets, $Z_n$,  so that $Z_n \cap H  =   H_n$. For each $\alpha  < \lambda$ and  $n\in\omega$,  choose a point $a(\alpha,n)$ in $Z_n \cap W_\alpha$.
Let  $x_\alpha$  denote the $u$-limit of the set $\{ a(\alpha,n) : n\in\omega \}$. Since $X$ is weakly pseudoradial and $A$ is radially closed then $x_\alpha$ is in $A$. It is easy to check that $\{ x_\alpha : \alpha < \lambda \}$ converges to  $x$, contradicting $A$ is non-closed.

If $\mathfrak{pse} = \aleph_1$, as $X$ is sequentially compact, the cofinality of $\lambda$ is uncountable. In particular, $\lambda \geq \omega_1 = \mathfrak{pse}^-$, therefore Lemma~\ref{LemmaLambda} applies. The set $A$ is $G_\lambda$-dense in $\overline{A}$ and must meet $H$, contradicting that $A$ is non-closed.
\end{proof}

\begin{proof}[Proof of Theorem \ref{t:product}]\
By Theorem \ref{t:pseudoradialiff} the spaces $X$ and $Y$ cannot be mapped onto $2^\mathfrak{pse}$. By Lemma \ref{LemmaProductCannotBeMapped}, $X\times Y$ cannot be mapped onto $2^\mathfrak{pse}$ either. Since we are assuming that $X\times Y$ is weakly pseudoradial, Theorem \ref{t:pseudoradialiff} applies again so the product is pseudoradial.
\end{proof}

\bibliographystyle{plain}
\bibliography{biblio}
\end{document}